\theoremstyle{plain}
\newtheorem{theorem}{Theorem}[section]
\newtheorem{corollary}[theorem]{Corollary}
\newtheorem*{corollary*}{Corollary}
\newtheorem{proposition}[theorem]{Proposition}
\newtheorem{lemma}[theorem]{Lemma}
\newtheorem*{proposition*}{Proposition}
\newtheorem*{theorem*}{Theorem}
\newtheorem*{lemma*}{Lemma}
\newtheorem{claim}{Claim}
\newtheorem*{claim*}{Claim}
\theoremstyle{definition}
\newtheorem{definition}{Definition}[section]
\newtheorem*{definition*}{Definition}
\theoremstyle{remark}
\newtheorem*{obs*}{Observation}
\newtheorem{example}{Example}
\newcommand{\Z}[1]{\mathbb{Z}_{#1}}
\newcommand{\prob}[1]{\mathbb{P}\left(#1\right)}
\newcommand{\case}[1]{\textbf{\emph{Case #1}}:}
\def\ZZ{\mathbb{Z}}
\def\NN{\mathbb{N}}
\newcommand{\Bin}{\ensuremath{\mathrm{Bin}}}
\newcommand{\sign}{\ensuremath{\mathrm{sign}}}
\newcommand{\rt}{\right}
\newcommand{\lt}{\left}
\newcommand{\cG}{\ensuremath{\mathcal{G}}}
\title{Virtually Fibering Random Right-Angled Coxeter Groups}
\date{\today}
\author{Gonzalo Fiz Pontiveros}
\address{} 
\email{}
\author{Roman Glebov}
\author{Ilan Karpas}
\begin{document}
\maketitle

\begin{abstract}
 
 

We show that the Right-Angled Coxeter group $C=C(G)$ associated to a random graph $G\sim \mathcal{G}(n,p)$
with $\frac{\log n + \log\log n + \omega(1)}{n} \leq p < 1- \omega(n^{-2})$ virtually algebraically fibers.
This means that $C$ has a finite index subgroup $C'$ and a finitely generated normal subgroup $N\subset C'$
such that $C'/N \cong \mathbb{Z}$.
We also obtain the corresponding hitting time statements, more precisely, we show that as soon as $G$ has minimum degree at least 2 and as long as it is not the complete graph, then $C(G)$ virtually algebraically fibers.
The result builds upon the work of Jankiewicz, Norin, and Wise and it is essentially best possible.
\end{abstract}

\section{Introduction}

A group $K$ \emph{virtually algebraically fibers} if there is a finite index subgroup $K'$ 
admitting a surjective homomorphism $K'\to \ZZ$ with finitely generated kernel. 
This notion arises from topology: a $3$-manifold $M$ is virtually a surface bundle over a circle 
precisely when the fundamental group of $M$  virtually algebraically fibers (see the result of Stallings~\cite{Sta61}).



	
A \emph{Right-Angled Coxeter group} (RACG) $K$ is a group given by a presentation of the form 
\[\left\langle x_1, x_2, \ldots x_n \;|\;x_i^2, [x_i, x_j]^{\sigma_{ij}}\;: 1\leq i< j\leq n\right\rangle  \]
where $\sigma_{ij}\in \{0,1\}$ for each $1\leq i <j\leq n$. One can encode this information with a graph $\Gamma_{K}$ whose vertices are the generators $x_1,\ldots, x_n$ and $x_i\sim x_j$ if and only if $\sigma_{ij}=1$. Conversely given a graph $G$ on $n$ vertices, we will denote the corresponding RACG by $K(G)$.

Random Coxeter groups have been of heightened recent interest, see
for instance Charney and Farber~\cite{charney2012random}, Davis and Kahle~\cite{davis2014random}, and Behrstock, Falgas-Ravry, Hagen, and Susse~\cite{behrstock2015global}.

 Recently, Jankiewicz, Norin, and Wise~\cite{Jankiewicz_Virtually} developed a framework to show virtual fibering of  a RACG using Betsvina-Brady Morse theory~\cite{Bestvina_Morse_1997} and ultimately translated the virtual fibering problem for $K$ into a combinatorial game on the graph $\Gamma_K$. The method was successful on many special cases and also allowed them to construct examples where Betsvina-Brady cannot be applied to find a virtual algbraic fibering. 
 
  A natural question to consider is whether this approach is successful for a `generic' RACG, i.e., given a probability measure $\mu_n$ on the set of RACG's of rank at most $n$, is it true that a.a.s. as $n\to \infty$, a group sampled from $\mu_n$ virtually algebraically fibers. This question is also considered in \cite{Jankiewicz_Virtually}, specifically they consider sampling $\Gamma_K$ from the Erd\H{o}s-Renyi random graph model $\mathcal{G}(n,p)$ and they prove the following result:
  \begin{theorem}[Jankiewicz-Norin-Wise]
\label{JNW}
	Assume that  \[\frac{(2\log{n})^{\frac{1}{2}}+\omega(n)}{n^{\frac{1}{2}}}\leq p < 1 -\omega(n^{-2}),\]
	and let $G$ be sampled from $\mathcal{G}(n,p)$. Then, asymptotically almost surely, the associated Right-Angled Coxeter group $K(G)$ virtually algebraically fibers.
	\end{theorem}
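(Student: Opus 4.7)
The plan is to apply the combinatorial reformulation of virtual algebraic fibering developed in \cite{Jankiewicz_Virtually} and verify the resulting condition on a typical $G\sim \mathcal{G}(n,p)$. That reformulation takes the following form: $K(G)$ virtually algebraically fibers whenever one can find a ``legal state'' on $G$, meaning an assignment of labels from a finite set to the vertices of $G$ such that certain local conditions on each vertex link $N(v)$ hold. These conditions arise from Bestvina-Brady Morse theory applied to a suitable finite-index subgroup of $K(G)$ corresponding to a cover and sign data on $V(G)$, and guarantee that the ascending and descending links in the resulting Morse function on the Davis complex are connected, which in turn gives finite generation of the kernel of the fibration.

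The natural probabilistic attack is to assign each vertex an independent uniformly random label and then verify, via a union bound over $v\in V(G)$, that every link $N(v)$ has the required structure. In the hypothesized range $p\geq (2\log n)^{1/2} n^{-1/2}(1+o(1))$, one has $np^{2}\geq 2\log n+\omega(1)$, so Chernoff bounds show that every pair of vertices has $(1+o(1))np^{2}$ common neighbors a.a.s., that every link induces a subgraph of size $\Theta(np)$ that itself behaves like $\mathcal{G}(np,p)$, and that within each link the distribution of labels is nicely concentrated. Using Janson's inequality or a second-moment computation to bound the probability of a violation inside a single link, one can then take a union bound over the $n$ links to establish legality a.a.s. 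For the dense regime $p<1-\omega(n^{-2})$ one passes to the complement: when $1-p$ is small, $\overline{G}$ has few edges and the condition can be verified almost deterministically by inspecting the non-edges incident to each link.

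The main obstacle is translating the JNW game's precise connectivity requirements into events tractable under a union bound. Each link has size $\Theta(np)$ and in principle contains exponentially many potential obstruction subconfigurations, so a naive union bound is too weak. The key must be that a typical link itself looks like a random graph, so that the desired legality is dictated by a small number of macroscopic features—density and label balance within each link—which concentrate tightly enough to hold simultaneously on all $n$ links. The threshold $(2\log n/n)^{1/2}$ should match exactly the point at which $np^{2}$ first exceeds $2\log n$, i.e. the regime where every pair of vertices has many common neighbors, a prerequisite for any link-based legality condition to succeed.
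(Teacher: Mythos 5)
This theorem is cited from \cite{Jankiewicz_Virtually} and is not proved in the present paper; however, Corollary~3.2 together with Theorem~4.1 of the paper give an independent proof covering JNW's entire range (and more, down to $p\geq 3\log n/n$), so I will compare your sketch against those. Your sketch correctly identifies the legal-system framework, its Morse-theoretic origin, and the broad strategy of a random assignment followed by a union bound, which is also the paper's strategy. The genuine gap is that you never specify the moves $M_v$: a random ``label'' at each vertex does not by itself produce a set of moves satisfying $v\in M_v$ and $N(v)\cap M_v=\emptyset$, and without a concrete $\mathcal{M}$ there is no collection of states $g(S)$ with $g\in\langle\mathcal{M}\rangle$ over which to union-bound, nor any precise local condition on links to check. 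The paper's answer (Theorem~4.1) is to fix an equitable proper colouring with $m=\Theta(np/\log(np))$ colours and set $M_v$ equal to the colour class of $v$; then $|\langle\mathcal{M}\rangle|=2^m$, each $g(S)$ is a uniformly random subset of linear size, and Lemma~4.3 (a linear-size random subset of $\mathcal{G}(n,p)$ is disconnected with probability $O(ne^{-\Omega(np)})$) makes the union bound over $2^m$ states close once $np\gg\log n$; the complementation identity $V\in\langle\mathcal{M}\rangle$ handles the ``complement is connected'' half of the legality condition. In the dense regime your passage to the complement is correct and matches Theorem~3.1 (a maximal matching in $\overline{G}$ supplies the moves).

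Separately, your closing heuristic is off: you assert that $np^2>2\log n$, i.e., every pair of vertices having $(1+o(1))np^2$ common neighbours, is ``a prerequisite for any link-based legality condition to succeed.'' The paper's main theorem shows that $K(G)$ a.a.s.\ virtually fibers already at $p=(\log n+\log\log n+\omega(1))/n$, where $np^2\to 0$ and almost every pair of vertices has no common neighbour at all. The $\sqrt{2\log n/n}$ threshold is an artefact of JNW's particular construction, not a barrier intrinsic to link-based arguments; the whole point of the paper is to replace that construction with one that reaches the true threshold at $\delta(G)\geq 2$.
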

	
  In this paper we extend this result to the smallest possible range of $p$, in fact we prove a hitting time type result. Namely we show that as soon as $\Gamma_K$ has minimum degree $2$ then a.a.s. $K$ virtually algebraically fibers.
    
  \begin{theorem}
  	\label{Main}
  	Let $G_0,G_1,\ldots, G_{\binom{n}{2}}$ 
  	denote the random graph graph process on $n$ vertices 
  	where $G_{i+1}= G_i\cup \{e_i\}$ 
  	and $e_i$ is picked uniformly at random from the non-edges of $G_i$.
  	Let $T=\min_{t}\;\{t\;: \delta(G_t)=2\}$, 
  	then a.a.s. the random graph process is such that 
  	$K(G_m)$ virtually algebraically fibers if and only if 
  	$T\leq m <\binom{n}{2}$. 
  	In particular for any $p$ satisfying
  	\[\frac{\log{n}+\log\log{n}+\omega(n)}{n}\leq p < 1-\omega(n^{-2})\]
  	and $G~\mathcal{G}(n,p)$,
  	the random Right-Angled Coxeter group $K(G)$ virtually algebraically fibers a.a.s.
  	 
  \end{theorem}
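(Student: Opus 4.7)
The plan combines the Jankiewicz--Norin--Wise reduction of virtual algebraic fibering to a combinatorial game on $G$ with a probabilistic construction that reaches down to the $\delta(G)\geq 2$ threshold, together with a short structural argument ruling out fibering at and below threshold. First I would invoke the framework of \cite{Jankiewicz_Virtually} to translate the virtual algebraic fibering of $K(G)$ into the existence of a \emph{legal state} $\sigma$ on $V(G)$, whose defining constraints are local to the link of each vertex. This is the same reduction used for Theorem~\ref{JNW}, and I would treat it as a black box; the goal then becomes producing such a $\sigma$ on $G$ a.a.s.

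For the obstruction direction, when $m=\binom{n}{2}$ the group $K(K_n)\cong(\ZZ/2)^n$ is finite and admits no surjection onto $\ZZ$. When $m<T$, the graph $G_m$ has a vertex of degree at most $1$, and in the typical structure of the random graph process at this stage this is incompatible with virtual algebraic fibering; this direction would be handled either by showing directly via the JNW framework that no legal state exists, or by a splitting/Bass--Serre argument built on the low-degree vertex. I expect this direction to be substantially easier than the constructive one.

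For the constructive direction I would use a two-round exposure, writing $G=G^{(1)}\cup G^{(2)}$ with $G^{(1)}\sim\mathcal{G}(n,p_1)$ slightly sparser than $G$ and $G^{(2)}$ an independent residue. On $G^{(1)}$ I would construct a random legal state via an appropriate vertex labeling, for instance a biased $2$-coloring, and verify by concentration plus a union bound that the legality condition holds at every vertex of ``typical'' degree. The few vertices of degree $2$ or $3$ are, a.a.s., far apart and sit in otherwise dense neighborhoods; I would patch $\sigma$ near each such vertex by a local deterministic modification, using the independent randomness in $G^{(2)}$ to ensure that the patches do not interfere with one another. The hitting-time statement for the process then follows from the standard coupling between $\mathcal{G}(n,m)$ and $\mathcal{G}(n,p)$.

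The main technical obstacle is the rigidity of the legality condition at the small-degree vertices: at a vertex of degree exactly $2$ the link has only two vertices, so the local constraints on $\sigma$ leave essentially no slack. Handling this requires (a) an a.a.s.\ structural claim that neighborhoods of degree-$2$ vertices are themselves well-behaved (for instance that the two neighbors have large and mostly disjoint further neighborhoods), and (b) arranging the bulk random construction so that it is compatible with the forced local choices on those few neighborhoods. A secondary subtlety is to ensure the construction succeeds simultaneously for every $m\in[T,\binom{n}{2})$, which may require a monotonicity lemma for the JNW condition under edge addition or a careful union bound over the tail of the random graph process.
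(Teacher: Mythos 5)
Your outline correctly identifies the JNW reduction and the role of the $\delta\geq 2$ threshold, but there are several substantive gaps, and the key one is conceptual.

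First, you describe the combinatorial target as ``a legal state $\sigma$ on $V(G)$, whose defining constraints are local to the link of each vertex.'' This is not the JNW reduction. What is needed is a \emph{legal system}: an initial state $S$ \emph{together with} a family of moves $\mathcal{M}=\{M_v\}$ (where $M_v\ni v$ and $M_v\cap N(v)=\emptyset$), such that \emph{every} set $g(S)$ with $g\in\langle\mathcal{M}\rangle$ is a legal state, meaning both $g(S)$ and its complement are non-empty and connected. This is a global condition on an orbit that is typically of size $2^{\Theta(\log n/\log\log n)}$, not a local link condition on a single labeling. Consequently, the heart of any proof must be a union bound over the entire orbit, and the construction must be designed so that every state in the orbit is simultaneously connected. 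Your sketch never engages with this; a concentration-plus-union-bound argument ``at every vertex of typical degree'' for a single $\sigma$ is the condition for Bestvina--Brady fibering of a RAAG, not for virtual fibering of a RACG via JNW. In the paper the difficulty is precisely in controlling not just vertices of low degree but also vertices whose neighbourhood is unbalanced across the colour classes for \emph{some} sign-vector $\sigma\in\{+,-\}^m$ (a set the paper calls $D_1$); these are forced to have their move set to a singleton and to be protected by a pair of dedicated neighbours whose signs are fixed to $\{+,-\}$.

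Second, your fallback idea of ``a monotonicity lemma for the JNW condition under edge addition'' cannot work: the legal-system property is not monotone (the complete graph, as you yourself note, has none), and even the relevant pseudorandom properties in the paper, such as $\Delta(G)=O(\log n)$ and $K_{2,3}$-freeness, are not upward-monotone. The paper handles the hitting-time statement not by monotonicity but by covering the edge-count range with three regimes (pseudorandom just above $T$, the equitable-colouring argument for $3\log n/n\leq p\leq 0.99$, and a complement-matching argument for very dense graphs) and using the known a.a.s.\ window for $T$.

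Third, for the obstruction direction you hedge between ``show no legal state exists'' and ``a splitting/Bass--Serre argument.'' Note that showing no legal system exists does \emph{not} by itself show $K(G)$ fails to virtually fiber, since JNW is a sufficient condition only; the paper's Proposition on $\delta(G)\leq 1$ is a clean combinatorial proof that no legal system exists on such a graph with $n\geq 4$, and the forward implication to non-fibering is a separate issue that you would need to address (or scope the claim to legal systems, as the paper effectively does in its proof).

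Finally, the two-round exposure/sprinkling scheme is a genuinely different route from the paper, which works with a single graph satisfying eight deterministic pseudorandom properties and does all the ``patching'' by reassigning signs on a carefully chosen deterministic set of private neighbours, not by sprinkling extra edges. Sprinkling is not obviously incompatible with the orbit condition, but as written your plan does not explain how the sprinkled edges would be used to repair \emph{all} $2^m$ states simultaneously, so it cannot be assessed as a complete alternative.
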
 

The paper is structured as follows. 
In Section~\ref{sec:legalsystems}, we establish the graph-theoretic framework used in the remainder of the paper, and show that the minimum degree condition is in fact necessary for $n\geq 3$ and  hence Theorem~\ref{Main} is best possible.

In Section~\ref{sec:dense},
we look at the opposite extreme and prove Theorem~\ref{Main}
for very large $p$.
The proof presented in Section~\ref{WeakBound} mainly serves to provide
the reader with the concepts and the intuition used later;
it shows Theorem~\ref{Main} for most of the range of the edge probability.
In Section~\ref{sec:construction}, we present the construction used for the final part of the proof of Theorem~\ref{Main}.
Then in Section~\ref{sec:pseudorandom} we prove Theorem~\ref{Main} in the remaining case in the pseudorandom setting, i.e., we prove the statement for every graph satisfying certain (deterministic) properties.
Finally, in Section~\ref{sec:proof} we put the pieces together, and show that indeed in the remaining interval for $p$ in Theorem~\ref{Main}, the random graph a.a.s. satisfies the conditions required in Section~\ref{sec:pseudorandom}, thus completing the proof.

??
\subsection{Notation}
$V$ always denotes the vertex set;
floor/ceiling;
$G(n,p)$ and relation to the random graph process;
$\log$ is base $e$

\section{Legal Systems}
\label{sec:legalsystems}

In this section we follow the definitions in \cite{Jankiewicz_Virtually} to present the combinatorial game introduced in \cite{Jankiewicz_Virtually} used to construct virtual algebraic fiberings of Right-Angled Coxeter groups.

\begin{definition}
	Let $G=(V,E)$ be a graph. We say that a subset  $S\subset V$ is a \emph{legal state} if both $S$ and $V\setminus S$ are non-empty {\em connected subsets} of $V$, i.e., the corresponding induced graphs are connected and non-empty.
	\end{definition}

\begin{definition}
	For each $v \in V$, a \emph{move at $v$} is a set $M_v\subseteq V$ satisfying the following:
	\begin{itemize}
		\item $v\in M_v$
		\item $N(v)\cap M_v=\emptyset$
	\end{itemize}
	Let $\mathcal{M}=\{M_v \; : v \in V\}$ denote a set of moves.
	\end{definition}
We will identify subsets of $V$ as elements of $\Z{2}^{V}$ in the obvious way. Thus each state and each move correspond to  elements of  $\Z{2}^{V}$ and we will think of moves acting on states via group multiplication (or addition in this case).

\begin{definition}
    For a graph $G$, a state $S\subseteq V(G)$, and a set of moves $\mathcal{M}=\{M_v \; : v \in V\}$, the triple $(G, S, \mathcal{M})$ is a \emph{legal system} if for any element $g \in \langle\mathcal{M}\rangle$, $g(S)$ is a legal state of $G$.
\end{definition}

\begin{theorem}[\cite{Jankiewicz_Virtually}]
Let $(G,S,\mathcal{M})$ be a legal system, then the RACG $K(G)$ must virtually algebraically fiber.
\end{theorem}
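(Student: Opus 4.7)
The plan is to translate the combinatorial data of a legal system into a $\ZZ$-valued Morse function on a CAT(0) cube complex, and then invoke the Bestvina-Brady Morse theory of \cite{Bestvina_Morse_1997}. Recall that $K(G)$ acts properly and cocompactly on its Davis complex $\Sigma(G)$, a CAT(0) cube complex whose vertex links are isomorphic to the flag simplicial complex $\mathrm{Flag}(G)$ on $G$.

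First, I would use the moves to extract a finite index subgroup. The abelianization map $K(G) \twoheadrightarrow \Z{2}^V$ pulls the subgroup $H := \langle \mathcal{M}\rangle \leq \Z{2}^V$ back to a finite index subgroup $K' \leq K(G)$, and a further torsion-free refinement of $K'$ acts freely and cocompactly on $\Sigma(G)$. Second, I would construct a character $\phi: K' \to \ZZ$ from the state $S$ by orienting each edge of the quotient $\Sigma(G)/K'$: at the vertex indexed by a coset $g \in H$, the edge labelled by $v \in V$ is oriented positively if $v \in g(S)$ and negatively otherwise, and $\phi$ is then the homomorphism counting signed edge traversals along loops. The defining properties of a move at $v$, namely $v \in M_v$ and $N(v) \cap M_v = \emptyset$, are exactly what make these orientations consistent around the squares of $\Sigma(G)/K'$, yielding a well-defined $\phi$.

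With $\phi$ in hand, the Bestvina-Brady theorem reduces finite generation of $\ker \phi$ to the requirement that the ascending and descending links of every vertex of $\Sigma(G)/K'$ be nonempty and connected. At a vertex indexed by $g \in H$ the link is $\mathrm{Flag}(G)$, and the ascending/descending splitting induced by $\phi$ separates its vertex set into $g(S)$ and $V \setminus g(S)$, with the edges and higher simplices inherited from $G$. Because $\mathrm{Flag}(G)$ is a flag complex, the induced subcomplex on a vertex set is connected iff the corresponding induced subgraph of $G$ is connected; hence the legality of $g(S)$, asserted by the definition of a legal system for every $g \in H$, is precisely the condition needed.

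The main obstacle is Step 2: verifying that the orientation prescription assembles into a homomorphism on all of $K'$ rather than merely on a single orbit of vertices. The cocycle condition around a square in $\Sigma(G)/K'$ with commuting edge labels $u,v$ is exactly what forces $M_v$ to contain $v$ and to be disjoint from $N(v)$, and it also forces one to understand how $S$ evolves under the full action of $H$. This is why the definition of a legal system demands that the entire orbit $H\cdot S$ consist of legal states rather than just $S$ itself, and it is where the careful bookkeeping of the proof lives.
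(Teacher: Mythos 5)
This statement is quoted in the paper directly from Jankiewicz, Norin, and Wise \cite{Jankiewicz_Virtually} with no proof given, so there is no internal argument of the paper to measure yours against. What the introduction does say is that the JNW framework rests on Bestvina--Brady Morse theory \cite{Bestvina_Morse_1997}, and your outline has the right shape for that: pass to a finite-index torsion-free subgroup acting freely and cocompactly on a CAT(0) cube complex built from $G$; build a character $\phi$ to $\ZZ$ by a consistent edge-orientation determined by the state; observe that the ascending and descending links in the vertex link $\mathrm{Flag}(G)$ are the induced subcomplexes on $g(S)$ and $V\setminus g(S)$, so that legality of the whole orbit of $S$ under $\langle\mathcal{M}\rangle$ is exactly the Bestvina--Brady hypothesis of nonempty connected ascending and descending links. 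That is also the correct explanation for why the definition of a legal system insists on legality of every $g(S)$, not just $S$ itself.

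The spot where your sketch does not yet hang together is the identification of the cube complex carrying $\phi$. You take $K'$ to be the preimage of $H:=\langle\mathcal{M}\rangle$ under abelianization, but then $K(G)/K'\cong \Z{2}^V/H$, so the vertices of $\Sigma(G)/K'$ are cosets of $H$, not elements of $H$; the phrase ``the vertex indexed by a coset $g\in H$'' conflates the two. More importantly, the rule ``orient $v$ positively at $g$ iff $v\in g(S)$'' is not well-defined on such a coset: replacing $g$ by $gh$ with $h\in H$ changes $g(S)$ whenever $h_v\neq 0$. The construction has to take place on a complex whose vertex set is a genuine group on which the moves act --- e.g.\ the quotient by the torsion-free normal subgroup $[K(G),K(G)]$, whose vertex set is all of $\Z{2}^V$, or a cube complex built directly with vertex set $H$ and an edge $\{g,\,gM_v\}$ for each $v$ --- and one must then verify the cocycle condition so that $\phi$ descends to a homomorphism on the corresponding finite-index subgroup. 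You flag this as ``the careful bookkeeping,'' which is fair, but as written the character $\phi$ is not yet defined, so the gap is in the construction itself rather than only in routine verification.
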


To elucidate the notion of a legal system, let us  look at some toy examples (see Figure~\ref{fig:examples}) and ask whether each of these graphs contains a legal system.
\begin{figure}
\label{fig:examples}
  \caption{A couple of toy examples.}
  \centering
  \vspace{0.5cm}
    \includegraphics[width=0.5\textwidth]{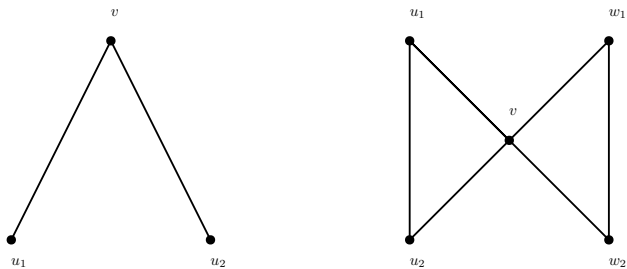}
\end{figure}

\begin{example} \label{cherry}
Let $G=(V,E)$ be a graph with three vertices $V=\{v,u_1,u_2\}$ and two edges $E=\{\{v,u_1\},\{v,u_2\}\}$. We show that $G$ has a legal system. Our initial legal state will be $S=\{u_1\}$. For our set of moves we choose $M_v=\{v\}$ (note that this is the only possible choice for the move at $v$), $M_{u_1}=M_{u_2}=\{u_1,u_2\}$. Then the group generated by the moves of the graph, written as a collection of sets, is $\langle\mathcal{M}\rangle=\{\{v\},\{u_1,u_2\},\{v,u_1,u_2\},\emptyset\}$. Hence, for any element $g\in \langle\mathcal{M}\rangle$, $g(S)$ is either a set of the form $\{u_i\}$ or $\{v,u_i\}$, for $i=1,2$, and in any case a legal state. Thus, $(G,S,\mathcal{M})$ is a legal system.
\end{example}

The graph in Example \ref{cherry} is unique in the sense that it is the only graph with a vertex of degree $1$ on at least $3$ vertices which contains a legal system. We prove this later in Proposition \ref{degree 1}.

Next, we look at an example of a graph without a legal system. We proceed by exhaustion.

\begin{example} \label{bowtie}
Let $V=\{v,u_1,u_2,w_1,w_2\}$, $E=\{\{v,u_i\},\{v,w_i\}, \{w_1,w_2\},\{u_1,u_2\}\}$, $i=1,2$. Let $G=(V,E)$. Assume by contradiction that $(G,S,\mathcal{M})$ is a legal system. Since $v$ is connected to all other vertices in the graph, we must have $M_v=\{v\}$. For the same reason, $v$ can not belong to any other move apart from $M_v$. Hence, we can assume without loss of generality that $v \notin S$.  Since $S$ is a connected subset of $V$, we can again assume without loss of generality that $S=\{u_1\}$ or $S=\{u_1,u_2\}$.

In the latter case, $M_{w_i}=\{u_1,u_2,w_i\}$ for $i=1,2$, because by the definition of a move, it must be the case that $\{w_i\}\subseteq M_{w_i} \subseteq \{w_i,u_1,u_2\}$, and if $u_1$ or $u_2$ would not belong to $M_{w_i}$, then $M_{w_i}S$ would not be a legal state. But then the set $\{w_1,w_2\} \in \langle\mathcal{M}\rangle$, and $\{w_1,w_2\}S=\{w_1,w_2,u_1,u_2\}$ is not a legal state. In the former case, from similar consideration, it must be the case that $M_{w_i}=\{w_i,u_1\}$ for $i=1,2$, but then again $\{w_1,w_2\}\in \langle\mathcal{M}\rangle$, and $\{w_1,w_2\}S=\{w_1,w_2,u_1\}$ is not a legal state. 
\end{example}

Next we show that Theorem~\ref{Main} is is essentially best possible. In fact, any graph on more than $3$ vertices with minimum degree at most $1$ does not have a legal system. 

\begin{proposition} \label{degree 1}
Let $G$ be a graph on $n$ vertices with $n\geq 4$ and suppose that $\delta(G)\leq 1$. Then $G$ does not have a legal system.   
\end{proposition}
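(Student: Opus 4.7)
The plan is to split into two cases according to $\deg(v) \in \{0, 1\}$, where $v$ is a vertex of minimum degree in $G$. In the case $\deg(v) = 0$, the vertex $v$ is isolated, so any connected subset containing $v$ must equal $\{v\}$; the only candidate legal states are therefore $\{v\}$ and $V \setminus \{v\}$. The orbit of $S$ under $\langle \mathcal{M}\rangle$ has size at most $2$, and since each $M_w$ contains $w$ it is non-empty, so the orbit has size exactly $2$. This forces $\langle \mathcal{M}\rangle = \{0, V\}$ as a subgroup of $\Z{2}^V$, and in turn $M_w = V$ for every $w$, giving $N(w) = \emptyset$. Thus $G$ has no edges at all, so $V \setminus \{v\}$ contains $n - 1 \geq 3$ isolated vertices and is disconnected, contradicting its legality.

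For the main case $\deg(v) = 1$, let $u$ denote the unique neighbor of $v$. The key structural observation is that every legal state $S$ either lies in $\{\{v\}, V \setminus \{v\}\}$ or has $v$ and $u$ on the same side of $\{S, V \setminus S\}$. I would then introduce the projection homomorphism $\psi \colon \langle \mathcal{M}\rangle \to \Z{2}^2$ onto the coordinates indexed by $v$ and $u$. Since $\psi(M_v) = (1, 0)$ and $\psi(M_u) = (0, 1)$, the map $\psi$ is surjective, so the orbit of $S$ visits all four profiles. Among states with profile $(1, 0)$ or $(0, 1)$ only $\{v\}$ and $V \setminus \{v\}$ respectively are legal, and since distinct elements of $\langle \mathcal{M}\rangle$ produce distinct orbit members, $\ker \psi$ must be trivial. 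Hence $\psi$ is an isomorphism with $|\langle \mathcal{M}\rangle| = 4$.

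It follows that each $M_w$ is one of the three non-identity elements $M_v$, $M_u$, $M_v + M_u$. Acting on $\{v\}$ by $M_u$ yields a legal state $T$ of profile $(1, 1)$; the constraints $u \in T$, $N(u) \cap (T \setminus \{v\}) = \emptyset$, and connectedness of $T$ force $T = \{v, u\}$, because $v$ is a leaf with sole neighbor $u$ and $u$ has no other neighbor inside $T$. This pins down $M_u = \{u\}$, and an analogous computation on $M_v \cdot \{v\}$ gives $M_v = V \setminus \{u\}$. For each $w \in V \setminus \{u, v\}$, the requirements $w \in M_w$, $N(w) \cap M_w = \emptyset$, and $M_w \in \{V \setminus \{u\}, \{u\}, V\}$ exclude $M_w = \{u\}$ and yield $N(w) \subseteq \{u\}$. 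Consequently the induced subgraph on $V \setminus \{u, v\}$ has no edges; since $n - 2 \geq 2$, it is disconnected, contradicting the legality of $V \setminus \{u, v\}$, the profile-$(0, 0)$ state in the orbit.

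The main obstacle is identifying the invariant $\psi$ in the degree-$1$ case; once surjectivity and injectivity are established, the three non-identity moves become explicit and the contradiction emerges from the disconnectedness of the complementary orbit state $V \setminus \{u, v\}$.
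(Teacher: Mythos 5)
Your argument follows essentially the same route as the paper's, with the projection $\psi\colon\langle\mathcal{M}\rangle\to\Z{2}^2$ serving as a tidy reformulation of the paper's translate argument: both proofs use the uniqueness of legal states of $(v,u)$-profile $(1,0)$ and $(0,1)$ together with freeness of the orbit action to force $|\langle\mathcal{M}\rangle|=4$, then compute the moves, conclude $G$ is a star centred at $u$, and derive a contradiction from the disconnectedness of the orbit state $V\setminus\{u,v\}$. One step, however, does not go through as written: the assertion that ``an analogous computation on $M_v\cdot\{v\}$ gives $M_v=V\setminus\{u\}$''. The state $M_v\cdot\{v\}=M_v\setminus\{v\}$ has profile $(0,0)$, and, unlike the $(1,0)$ and $(0,1)$ cases, legality alone does not single out a unique state of that profile; on the path with vertex sequence $v,u,w,x$, for instance, $M_v=\{v,x\}$ makes both $M_v\setminus\{v\}=\{x\}$ and its complement $\{v,u,w\}$ connected, so examining $M_v\cdot\{v\}$ cannot pin down $M_v$. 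The conclusion $M_v=V\setminus\{u\}$ is nevertheless correct, and is easily recovered in your framework: either note that $(M_u+M_v)(\{v\})$ has profile $(0,1)$ and hence must equal $V\setminus\{v\}$, giving $M_u+M_v=V$ and $M_v=V\setminus\{u\}$; or, as the paper does, observe that every $M_w$ with $w\neq u$ lies in $\{M_v,\{u\},M_v\cup\{u\}\}$ and contains $w$, whence $w\in M_v$ for all $w\neq u$.
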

\begin{proof}
For graphs with isolated vertices the statement is obvious, therefore we can assume that $\delta(G)=1$.
	We argue by contradiction. Suppose there exists an $S\subset V(G)$ and a set of moves $\mathcal{M}$ such that the triple $(G,S, \mathcal{M})$ is a legal system. Let $v$ be a vertex with $d(v)=1$ in $G$ and let $u$ be its unique neighbour. Since $u\notin M_v$ and $v\notin M_u$ we may assume without loss of generality that both $u,v \in S$ (if not then simply take a suitable translate). Observe that $v\in M_u(S)$ and $u\notin M_u(S)$. Furthermore, by our assumption the set $M_u(S)$ is connected and thus $M_u(S)=\{v\}$. Recall that $M_u$ is a set of non-neighbours of $u$ together with $u$ itself, and hence $S=M_u(M_u(S))=\{u,v\}$ which in turn implies that $M_u=\{u\}$. 
	
	\begin{claim*}
For every $g \in \langle \mathcal{M} \rangle$, we have that either 
	\begin{equation}
	\label{or}
	g(S)\in \{\{v\},\{u,v\}\} \text{\; or \; } M_v(g(S))\in  \{\{v\},\{u,v\}\}.	
	\end{equation}
		
	\end{claim*}

	Note that $u$ either belongs to both sets $g(S)$ and $M_v(g(S))$ or to neither of them, since $u \notin M_v$, whereas $v$ belongs to exactly one of these sets. Assume without loss of generality that $v \in g(S)$. If $u \in g(S)$, then $M_u(g(S))=g(S) \setminus \{u\}$ is a connected set containing $v$ but not $u$, and thus must be equal to $\{v\}$. This means that $g(S)=\{u,v\}$, providing \eqref{or}.
	
	If, on the other hand, $u \notin g(S)$, then $g(S)$ is a connected set which contains $v$ but not $u$, which again means that $g(S)=\{v\}$, again providing \eqref{or}.
	
	Thus, at least half the sets in $\left\{ g(S):~g\in \langle \mathcal{M} \rangle\right\}$  
	are either $\{ v \}$ or $\{ u,v \}$, which means that $\left|\left\{ g(S):~g\in \langle \mathcal{M} \rangle\right\}\right| \leq 4$, and therefore $M_w\in \left\{M_v, M_v\cup\{u\}\right\}$ for any $w\neq u$. Hence, $w \in M_v$ for any $w \neq u$, which means that $M_v=V\setminus\{u\}$. Furthermore, as $G$ has no isolated vertices we must have that $M_w=V\setminus\{u\}$ for any $w\neq u$ an hence $G$ must be in fact a star. The only way $M_v(M_u(S))=V\setminus \{u,v\}$ can be connected is if $n\leq 3$, a contradiction.
		\end{proof}

\section{Very dense regime}
\label{sec:dense}
In this section we show Theorem~\ref{Main} in the simpler range of very dense graphs.
\begin{theorem}
\label{densethm}
Let $G \in \mathcal{G}(n,m)$, i.e., a graph with $m$ edges picked uniformly at random. Suppose that $0.98\binom{n}{2}\leq m< \binom{n}{2}$. Then a.a.s $G$ has a legal system.       	
\end{theorem}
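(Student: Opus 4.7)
The plan is to exploit the sparsity of the complement $\bar G := K_n \setminus G$, which has at most $0.02\binom{n}{2}$ edges in this regime, and to build a legal system from a matching in $\bar G$. Since every admissible move satisfies $M_v \subseteq \{v\}\cup N_{\bar G}(v)$, the structure of $\bar G$ dictates the construction.

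First, I would establish, a.a.s.\ over the choice of $G$, two structural properties of $\bar G$: (i) there is a matching $M = \{e_1,\ldots,e_k\}$ in $\bar G$ saturating every non-isolated vertex of $\bar G$, and (ii) $\Delta(\bar G) \le n/4$. Property (ii) is a routine concentration estimate, and property (i) combines a birthday-type computation for very small $|\bar G|$ (in which case $\bar G$ is itself a matching a.a.s.) with standard perfect-matching results for larger $|\bar G|$, together with care to rule out odd components of the non-isolated part of $\bar G$ in the intermediate range.

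Second, I would define the moves. Writing $e_i = \{a_i,b_i\}$ and letting $I := V\setminus V(M)$ be the vertices isolated in $\bar G$, set
\[
M_{a_i} := M_{b_i} := \{a_i,b_i\} \qquad \text{and} \qquad M_v := \{v\} \text{ for } v \in I.
\]
Each $M_{a_i}$ is valid because $\{a_i,b_i\}$ is a non-edge of $G$, and the singleton moves on $I$ are trivially valid. Take $S := \{a_1,\ldots,a_k\}\cup I_0$ for an arbitrary proper subset $I_0\subsetneq I$. Since $\langle \mathcal{M}\rangle$ is generated by the pair-flips $\{a_i,b_i\}$ and the singleton-flips on $I$, the orbit of $S$ is exactly
\[
\mathcal{O} = \bigl\{A \subseteq V : |A\cap e_i| = 1 \text{ for every } i,\ A\cap I \text{ arbitrary}\bigr\}.
\]

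Third, I would verify that every $A\in\mathcal{O}$ is a legal state. The crucial observation is that no matched non-edge $e_i$ lies inside $A$ or inside $V\setminus A$, so every non-edge of $G$ inside $G[A]$ belongs to $F := \bar G\setminus M$. Combined with $\Delta(\bar G)\le n/4$, this forces every vertex of $A$ to have $G[A]$-degree at least $|A|-1-n/4$, which exceeds $|A|/2$ whenever $|A|$ is linear in $n$, and Dirac's theorem then yields Hamiltonicity (in particular connectivity); when $A\cap I$ is nonempty, any $v\in A\cap I$ is adjacent in $G$ to every other element of $A$ and directly witnesses connectivity. The analogous bounds handle $V\setminus A$. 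The main obstacle I anticipate is the corner case $A\cap I = \emptyset$, in which $A$ is only one endpoint per matching edge and thus possibly much smaller than $n$: here one must argue that cross non-edges $\{a_i,b_j\}, \{a_i,a_j\}, \{b_i,b_j\}$ in $F$ cannot, for any endpoint choice $(a_i^*)_{i=1}^k$, disconnect $G[\{a_1^*,\ldots,a_k^*\}]$. This is the pseudorandom heart of the argument and may require selecting $M$ more carefully than an arbitrary saturating matching, or bootstrapping from the bound on $\Delta(\bar G)$ together with a union-bound over the $2^k$ endpoint choices.
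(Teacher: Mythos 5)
Your construction is essentially the same as the paper's (a matching in $\bar G$, pair--flip moves on matched vertices, singleton moves elsewhere, and a connectivity check on the orbit), but two of the steps as you have sketched them do not go through. First, requiring $M$ to saturate \emph{every} non-isolated vertex of $\bar G$ is not achievable a.a.s.\ across the whole range: when, say, $|E(\bar G)| = \Theta(n)$, the complement $\bar G$ contains triangles and other odd components with probability bounded away from zero, and no matching can saturate all vertices of an odd component, so such an $M$ simply fails to exist. The paper avoids this by taking an arbitrary \emph{maximal} matching $F$; the unmatched set $V\setminus V(F)$ need not be $\bar G$-isolated, but maximality forces it to be $\bar G$-independent and hence a $G$-clique, which is all the connectivity argument actually uses.

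Second, the corner case $A\cap I=\emptyset$ is a genuine hole, as you acknowledge. Your Dirac estimate with $\Delta(\bar G)\le n/4$ requires $|A|-1-n/4>|A|/2$, i.e.\ $|A|>n/2+2$, yet in this case $|A|=k\le n/2$, so the bound never closes as stated. The paper instead observes that a disconnection of any $g(S)$ forces some vertex to have $\bar G$-neighbours in at least $\lceil k/2\rceil$ of the matching pairs, and splits by $|E(\bar G)|$: when $|E(\bar G)|=o(\sqrt n)$, a.a.s.\ $\bar G$ is itself a matching, so $F=\bar G\setminus M=\emptyset$ and no such vertex can exist; when $|E(\bar G)|=\Omega(\sqrt n)$, a.a.s.\ $k=(1-o(1))n/2$ while $\Delta(\bar G)$ is concentrated well below $k/2$ by Chernoff. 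Your plan can be completed along the same lines (replace the saturating matching with a maximal one, sharpen the $\Delta(\bar G)$ bound, and add the two-regime analysis for the corner case), but as written it has these two real gaps.
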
	

\begin{proof}
Let $H$ denote the complement of $G$ and observe that $H\sim \mathcal{G}(n,t)$ where $t={n\choose 2}-m$. 
The strategy to find a legal system is a simple one: first we find a maximal matching $F=\{\{u_1,v_1\},\ldots ,\{u_k,v_k\}\} \subset H$. Then let $S=\{u_i\;: 1\leq i \leq k\}$ and for each $1\leq i \leq k$ set $M_{u_i}=M_{v_i}=\{u_i,v_i\}$ and $M_v=\{v\}$ for all $v\notin F$. We claim that with high probability this defines a legal system for $G$. 

Note that $V \in \langle \mathcal{M} \rangle$ and hence for any $g \in  \langle \mathcal{M} \rangle$, the complement of $g(S)$ can be expressed as $V \setminus g(S)=(Vg)(S)$, in other words the orbit of $S$ is closed under taking complements. In particular, to prove the claim, it is enough to show that for any $g \in \langle \mathcal{M} \rangle$, the set $g(S)$ is connected.

Furthermore, since $H$ contains at least one edge, $F$ must also be non-empty and so $g(S)\neq \emptyset$ for any $g\in \langle\mathcal{M}\rangle$. Thus it is sufficient to show that for every $g\in \langle\mathcal{M}\rangle$, the set $g(S)$ is connected.

By maximality of $F$, we know that $G[V\setminus F]$ is a clique in $G$ (equivalently an independent set in $H$). Hence, by our choice of moves, the only way that $g(S)$ can fail to be connected is if there exists some $v\in V$ such that 
\begin{equation}\tag{$\star$}
\label{cond}	
|N_{H}(v)\cap \{u_i,v_i\}|\geq 1 \text{ for at least $\lceil k/2\rceil$ indices } i\in [k]. 
\end{equation}

We now consider two cases.\\
\case{1} $t=o(n^{\frac{1}{2}})$. Observe that the expected number of paths of length two in $G(n,t)$ is at most $n^3(\frac{2t}{n^2})^2 \to 0$. In particular, by Markov, with high probability no two edges are incident in $H$. In particular \eqref{cond} cannot happen with high probability.\\
\case{2} $t=\Omega(n^{\frac{1}{2}})$. Observe that the expected number of independent sets of size $l$  in $\mathcal{G}(n,t)$ is

	$$O\left(n^{l}\left(1-\frac{2t}{n^2}\right)^{l^2/2}\right) =O\left( n^{l}e^{-\frac{1}{2}n^{-\frac{3}{2}}l^2}\right).$$

In particular, with high probability $H$ has no independent set of size $\Omega(n^{\frac{3}{4}})$. It follows that with high probability  $|F|=(1-o(1))n=2k$. On the other hand, if \eqref{cond} occurs, we must have that there exists $v \in V$ such that $d_H(v)\geq k/2$, and by Chernoff ??add reference to chernoff from somewhere, perhaps?? the probability of such high degree vertex is vanishingly small.  

\end{proof}
 \begin{corollary}
 	Let $G\in \mathcal{G}(n,p)$ where $0.99\leq p< 1- \omega(n^{-2})$. Then a.a.s. $G$ has a legal system.
 \end{corollary}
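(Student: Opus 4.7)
The plan is to deduce the corollary from Theorem~\ref{densethm} by conditioning on the number of edges $M = |E(G)|$: given $M = m$, the graph $G \sim \mathcal{G}(n,p)$ is distributed as $\mathcal{G}(n,m)$, so if I can show that $M \in [0.98\binom{n}{2},\binom{n}{2})$ a.a.s.\ and that the failure probability in Theorem~\ref{densethm} is uniform in $m$ on this range, the result follows by the law of total probability.

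For the lower bound on $M$, a standard Chernoff estimate applied to $M \sim \Bin(\binom{n}{2}, p)$ with $p \geq 0.99$ gives $\prob{M < 0.98\binom{n}{2}} = \exp(-\Omega(n^2))$. For the upper bound, it suffices to show that $G$ is not complete a.a.s.; since $1-p \geq \omega(n^{-2})$, we have $\prob{M = \binom{n}{2}} = p^{\binom{n}{2}} \leq \exp\left(-(1-p)\binom{n}{2}\right) = o(1)$.

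It then remains to note that the failure probability in Theorem~\ref{densethm} is uniform in $m$ on $[0.98\binom{n}{2}, \binom{n}{2})$. Writing $t = \binom{n}{2} - m$, this range corresponds to $t \in [1, 0.02\binom{n}{2}]$, which is covered by the two cases of the theorem's proof ($t = o(n^{1/2})$ and $t = \Omega(n^{1/2})$); each case provides an $o(1)$ bound uniform in $t$ within its subrange, so taking the supremum over both yields a single $o(1)$ bound valid for all $m$ in the range.

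There is no substantive obstacle: the corollary is a routine translation between the $\mathcal{G}(n,m)$ and $\mathcal{G}(n,p)$ models. An equally clean alternative would be to redo the construction of Theorem~\ref{densethm} directly in the $\mathcal{G}(n,p)$ setting, using that $\overline{G} \sim \mathcal{G}(n,1-p)$ with $1-p \in (\omega(n^{-2}), 0.01]$ and repeating verbatim the two-case analysis (splitting according to whether $(1-p)n^2$ is $o(n^{1/2})$ or not); the matching-based construction and the calculations of the expected number of paths of length two and of large independent sets transfer without change.
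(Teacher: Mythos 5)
Your proof is correct and matches the paper's argument: both condition on the number of edges $M\sim\Bin\bigl(\binom{n}{2},p\bigr)$, show that a.a.s.\ $0.98\binom{n}{2}\leq M<\binom{n}{2}$, and then invoke Theorem~\ref{densethm}. You are slightly more careful than the paper in flagging the need for the $o(1)$ failure bound in Theorem~\ref{densethm} to be uniform over $m$ in this range, which is the right thing to check when transferring an a.a.s.\ statement across conditioning.
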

\begin{proof}
Sampling from $G$ from $\mathcal{G}(n,p)$ is equivalent to first choosing a random number $m\sim \text{Bin}({n\choose 2},p)$ of edges and then sampling $G$ from $\mathcal{G}(n,m)$. For $p$ in the above range we have that a.a.s. $0.98{n\choose 2}\leq m < {n\choose 2}$ and the corollary follows follows from Theorem \ref{densethm}.
\end{proof}

Observe that this upper bound is also optimal since for $p=1-cn^{-2}$, the probability that $G$ is in fact the complete graph is bounded away from $0$ and it is easy to see that the complete graph cannot have a legal system.

\section{A weaker bound}
\label{WeakBound}

Before we attempt to prove the main result of the paper we will give here a simple proof for a slightly smaller range of $p$. Namely we will show the following:

\begin{theorem} \label{big-p}
Let $\frac{3\log n}{n}\leq p \leq 0.99$. Then a.a.s. $G\sim \mathcal{G}(n,p)$ has a legal system. 
\end{theorem}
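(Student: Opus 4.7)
The plan is to adapt the strategy of Theorem~\ref{densethm} to the much broader range $p \in [3\log n/n, 0.99]$. Given $G \sim \mathcal{G}(n, p)$, consider the complement $H = \bar G$. Since $1-p \geq 0.01$ throughout our range, $H$ is a.a.s. dense enough to contain a near-perfect matching $F = \{\{u_i, v_i\} : i \in [k]\}$. I would use the same legal-system data as in Theorem~\ref{densethm}: initial state $S = \{u_1, \ldots, u_k\}$, moves $M_{u_i} = M_{v_i} = \{u_i, v_i\}$ for $i \in [k]$, and $M_w = \{w\}$ for $w \notin V(F)$. As before, $V \in \langle \mathcal{M}\rangle$, so the orbit of $S$ is closed under complementation, and the legal-system property reduces to showing that every orbit element $g(S) = \{x_1, \ldots, x_k\} \cup W$ (with $x_i \in \{u_i, v_i\}$ and $W \subseteq V \setminus V(F)$) is connected in $G$.

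First, I would collect the pseudorandom properties of $G$ that hold a.a.s. for $p \geq 3\log n / n$: connectedness, minimum degree $(1 + o(1))np \gtrsim 3\log n$, and strong edge expansion (every pair of disjoint linear-sized vertex sets has $\Theta(n^2 p)$ $G$-edges between them). Moreover, by maximality of $F$ in $H$, the residual set $V \setminus V(F)$ forms a clique in $G$, which immediately takes care of the mutual connectivity of $W$ inside any orbit element.

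The main step is then to show that each transversal vertex $x_i$ is connected to the rest of the orbit element via $G$-edges, either through $W$ or through other $x_j$'s. The analysis of Theorem~\ref{densethm} (verifying that no vertex triggers the bad condition~\eqref{cond}) relied crucially on $H$ being very sparse; in our range $H$ is dense, so that vertex-by-vertex calculation breaks. Instead I would argue globally, using the expansion of $G$: for any candidate partition of $g(S)$ into two non-empty parts, the number of $G$-edges across is $\Omega(n p) = \Omega(\log n)$ in expectation, and by concentration every orbit element is a.a.s. connected.

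The main obstacle is that a direct union bound over the $2^{O(n)}$ orbit elements does not succeed when $p$ is as small as $3\log n / n$, since the failure probability that a single orbit element is disconnected is only $1 - n^{-\Omega(1)}$. I anticipate needing a refinement: grouping orbit elements by structural type, exploiting positive correlation between the connectivity events via an FKG-type argument, or restricting to a subfamily of ``critical'' orbit elements whose connectivity implies that of the rest. Balancing this refinement against the freedom in choosing $F$ and the initial state $S$ is the main technical challenge, and it foreshadows the more delicate pseudorandom arguments used in the proof of Theorem~\ref{Main} in later sections.
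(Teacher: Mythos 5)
Your proposal follows the matching-in-the-complement construction of Theorem~\ref{densethm}, but that construction is fundamentally unsuitable in the sparse range, and the gap you flag at the end is not a technical refinement away from closing --- it is the whole problem, and the paper resolves it by using a completely different set of moves.

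Concretely: with $p = \Theta(\log n / n)$ the complement $H$ is dense, so your maximal matching $F$ has $k = (1/2 - o(1))n$ pairs, and the orbit $\{g(S) : g \in \langle \mathcal{M}\rangle\}$ has size $2^{k} = 2^{\Theta(n)}$. Meanwhile the probability that a fixed orbit element $g(S)$ (of size roughly $n/2$) has an isolated vertex in $G$ is about $n(1-p)^{n/2} \approx n^{1 - 3/2} = n^{-1/2}$, which is only polynomially small. Thus the per-element failure probability is nowhere near $2^{-\Theta(n)}$, and no FKG or positive-correlation trick can rescue a union bound here: these connectivity events are not even close to being perfectly correlated, and a constant fraction of orbit elements genuinely will be disconnected. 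The fix in the paper is not to shrink the failure probability but to shrink the orbit: it takes an \emph{equitable proper colouring} $C_1, \ldots, C_m$ with $m = \Theta(np/\log(np)) = O(\log n / \log\log n)$ colours (via Krivelevich--Patk\'{o}s), sets $M_v = C_i$ for $v \in C_i$, and takes $S$ random. Because any two vertices in the same colour class share a move, the orbit has size only $2^{m} = n^{O(1/\log\log n)}$, and moreover each $g(S)$ has the same distribution as $S$. Then the per-element failure probability $O(ne^{-cnp}) = n^{-\Omega(1)}$ beats the polynomially bounded orbit size with room to spare. You would need to replace your matching moves with colour-class moves to make the argument go through.
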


This achieves several purposes. We will be able to already introduce some of the ideas and statements required for the following section, motivate definitions in the construction and also present simplified computations by having a more restricted range of $p$. 
	

An \emph{equitable} colouring of a graph $G$ 
is a proper colouring of the vertices of $G$, where the sizes of any two colour classes differ by at most $1$. The {\em equitable chromatic number} of $G$ is the smallest integer $k$ such that there exists an equitable colouring of $G$ with $k$ colours.

We use the following theorem of Krivelevich and Patk\'{o}s~\cite{KrivPat09}.

\begin{theorem}[Krivelevich-Patk\'{o}s~\cite{KrivPat09}]
\label{K-P}
Let $G\sim \mathcal{G}(n,p)$. There exists a constant $C$ such that asymptotically almost surely the following holds:

\begin{itemize}
        \item[(a)] If \; $ \frac{C}{n}\leq p\leq \log{n}^{-8}$, then  \[\chi_{=}(G)\leq \frac{np}{(1-o(1)\log{(np)}}.\]
    \item[(b)] If\; $ \log{n}^{-8}<p<0.99$, then 
    \[\frac{n}{2\log_b{n}-\log{\log_b{n}}}\leq \chi_{=}(G)\leq \frac{n}{2\log_b{n}-8\log{\log_b{(np)}}},\]
    where $b=\frac{1}{1-p}$.
\end{itemize}

Note that when $p \to 0$, then $\log_b{n}-\log{\log_b{(np)}} \sim \frac{\log{(np)}-\log\log{(np)}}{p}$.
\end{theorem}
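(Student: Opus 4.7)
The plan is to prove the upper and lower bounds separately, bounding the equitable chromatic number above by explicit construction of an equitable coloring and below by reduction to the ordinary chromatic number.

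For the lower bound in part (b), note $\chi_=(G) \geq \chi(G)$, so it suffices to establish the classical bound $\chi(G) \geq n/(2\log_b n - \log\log_b n)$ by a first moment argument on independent sets. Setting $k = \lfloor 2\log_b n - \log\log_b n \rfloor$, one checks
\[
\mathbb{E}\bigl[\#\text{ independent sets of size }k\bigr] \leq \frac{n^k}{k!}\,b^{-\binom{k}{2}} \to 0,
\]
using $\log b = \log(1/(1-p))$. Hence a.a.s.\ $\alpha(G) < k$, which forces $\chi(G) \geq n/k$.

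For the upper bound in part (b), set $s = \lfloor 2\log_b n - 8\log\log_b(np) \rfloor$. The construction proceeds in two phases. \emph{Phase one} (peeling): iteratively extract disjoint independent sets of size exactly $s$ from $G$. A second moment estimate for the number of independent sets of size $s$, combined with Shamir-Spencer concentration for $\alpha$, shows a.a.s.\ that every induced subgraph $G[U]$ with $|U| \geq n/\log^{3} n$ still has independence number at least $s$, because the gap between $s$ and $2\log_b|U|$ grows like $\log\log n$. Peeling thus yields $(1-o(1))n/s$ color classes of size exactly $s$ and a residue $R$ of at most $n/\log^{3} n$ vertices. \emph{Phase two} (equitable rebalancing): absorb $R$ into existing classes and equalise sizes using Hajnal-Szemer\'edi style swaps — while some class $I$ has excess size and some class $J$ is deficient, find $v \in I$ with no neighbour in $J$ and reassign $v$ to $J$. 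Existence of such $v$ a.a.s.\ follows from the fact that the bipartite subgraph of $G$ between any two typical classes of size $\Theta(\log n / p)$ has edge density concentrated around $p$, leaving each class with many non-neighbours in any other.

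The main obstacle is executing phase two jointly across all pairs $(I,J)$ without destroying properness or equitability along the way. This is precisely where the extra slack in the upper bound — the gap $8\log\log_b(np)$ as opposed to the tight $\log\log_b n$ appearing in the lower bound — is spent: it permits a union bound over all $O((n/\log n)^2)$ such pairs, combined with Chernoff estimates for bipartite edge counts, to guarantee that the redistribution terminates in $o(n)$ moves with an equitable proper coloring on all of $V(G)$. Part (a) follows a parallel plan with $s \approx \log(np)/p$, but concentration of $\alpha(G[U])$ in the sparse regime must be obtained via Azuma-Hoeffding on the edge exposure martingale rather than second moments, and the bipartite density estimates used in the rebalancing step are replaced by sparse analogues whose error terms are controlled by the hypothesis $p \geq C/n$.
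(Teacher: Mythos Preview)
This theorem is not proved in the paper at all: it is quoted verbatim as a result of Krivelevich and Patk\'{o}s~\cite{KrivPat09} and used as a black box. The paper only needs the consequence $\chi_{=}(G)=\Theta(np/\log(np))$ in the relevant range in order to run the argument of Theorem~\ref{big-p} and to verify Property~\eqref{chromatic} in Theorem~\ref{pseudorandom}. So there is no ``paper's own proof'' to compare your proposal against.

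As for your sketch on its own merits: the lower bound via the first moment on independent sets is fine and standard. The peeling phase of your upper bound is also the right idea and matches how such arguments typically begin. The weak point is your Phase two. ``Hajnal--Szemer\'edi style swaps'' between pairs of classes, justified by bipartite density concentration plus a union bound over $O((n/\log n)^2)$ pairs, is not enough as written: a single swap can destroy the balance of other classes or fail properness with a third class, so the swaps are not independent events you can union-bound over, and you have not argued that the process terminates. The actual Krivelevich--Patk\'{o}s argument handles the residue more carefully (via an explicit redistribution scheme rather than uncontrolled local swaps). If you want to make your outline rigorous, you would need either to invoke the Hajnal--Szemer\'edi theorem on a suitable auxiliary graph, or to design the rebalancing so that each residual vertex is inserted once into a class where it has no neighbour, with a global matching argument guaranteeing all insertions can be done simultaneously.
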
  

We are now ready to prove Theorem~\ref{big-p}.

\begin{proof}[Proof of Theorem~\ref{big-p}]

By Theorem \ref{K-P} we know that a.a.s. we can find an equitable colouring of $G$ with $m=\Theta\left(\frac{np}{\log{(np)}}\right)$ colours.
Call the colour classes $C_1,\ldots, C_m$ and set $M_v=C_i$, where $C_i$ is the colour class that $v$ belongs to. So $v \in M_v$ and $N(v) \cap M_v=\emptyset$, as required. Let $S$ be a random subset of $V$ where each $v \in V$ is included into $S$ independently with probability $\frac{1}{2}$. 

Note that, as in the proof of \ref{densethm}, $V \in \langle \mathcal{M} \rangle$ and hence it is enough to show that for any $g \in \langle \mathcal{M} \rangle$, the set $g(S)$ is connected and non-empty.

The following well known lemma essentially reduces  the task to proving that none of these sets contains an isolated vertex.

\begin{lemma}
\label{Csubsets}
	Let $G \in \mathcal{G}(n,p)$ and $S\subset V(G)$ with $|S|\geq c n$ for some $c>0$. Then
\[\prob{S\text{ is not connected}}= O\left(\prob{S \text{ contains an isolated vertex}}\right)=O\left(ne^{-c np}\right).\] 
\end{lemma}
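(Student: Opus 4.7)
The plan is to establish both bounds by exploiting the fact that, since $S$ is fixed, the induced subgraph $G[S]$ has the distribution of $\mathcal{G}(|S|,p)$, and both events in the statement refer to this induced subgraph.

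First I would handle the explicit estimate via a direct union bound over $v\in S$: the probability that a given vertex has no neighbour inside $S$ is $(1-p)^{|S|-1}$, so
\[\prob{S \text{ contains an isolated vertex}} \leq |S|(1-p)^{|S|-1} \leq n\,e^{-p(cn-1)} = O\!\left(ne^{-cnp}\right),\]
using $|S|\geq cn$ and $1-p\leq e^{-p}$. This immediately yields the second equality of the lemma.

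Next I would handle the comparison with disconnectedness via the standard cut decomposition: if $G[S]$ is disconnected then there exists a proper non-empty $T\subsetneq S$ with $|T|\leq |S|/2$ and no edges of $G$ crossing between $T$ and $S\setminus T$. Union-bounding over $T$ gives
\[\prob{G[S]\text{ disconnected}} \leq \sum_{k=1}^{\lfloor |S|/2 \rfloor}\binom{|S|}{k}(1-p)^{k(|S|-k)}.\]
The $k=1$ term matches the isolated-vertex estimate exactly, so it remains to show that the tail $\sum_{k\geq 2}$ is at most a constant multiple of $|S|(1-p)^{|S|-1}$. I would split this tail into a small-$k$ range, where consecutive terms shrink geometrically by a factor bounded by
\[\frac{|S|-k}{k+1}(1-p)^{|S|-2k-1} = o(1)\]
under the regime $p\geq 3\log n/n$ used in Theorem~\ref{big-p}, and a large-$k$ range near $|S|/2$, where I would apply the crude bound $\binom{|S|}{k}(1-p)^{k(|S|-k)}\leq 2^{|S|}e^{-p|S|^2/4}$, whose exponential saving of order $n\log n$ easily absorbs the $2^{|S|}$ factor.

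The main obstacle is choosing the split point cleanly: the ratio test ceases to yield decay as $k$ approaches $|S|/2$ (since $(1-p)^{|S|-2k-1}\to 1$ there), while the crude bound is only sharp enough once $k(|S|-k)$ is comparable to $|S|^2/4$. I would take a threshold of the form $k_0 = |S|/2 - \Theta(n)$ so that both estimates remain effective on their respective sides; the lower bounds $p\geq 3\log n/n$ and $|S|\geq cn$ enter precisely to guarantee that the small-$k$ ratio is $o(1)$ throughout $k\leq k_0$ and that the super-exponential saving in the large-$k$ tail dominates.
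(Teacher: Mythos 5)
The paper does not prove this lemma at all: it is invoked as a ``well known lemma'' and no argument is given, so there is nothing to compare against. Taken on its own, your proposal is the standard argument and is essentially sound, but there is one slip in the large-$k$ tail and one scope caveat worth flagging.

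The slip: you write $\binom{|S|}{k}(1-p)^{k(|S|-k)}\leq 2^{|S|}e^{-p|S|^2/4}$, but the second factor has the inequality backwards. Since $k(|S|-k)\leq |S|^2/4$ for all $k$ and $(1-p)^x$ is decreasing in $x$, one has $(1-p)^{k(|S|-k)}\geq (1-p)^{|S|^2/4}$, not $\leq$. What you actually want is
$\binom{|S|}{k}(1-p)^{k(|S|-k)} \leq 2^{|S|}\, e^{-p\,k(|S|-k)} \leq 2^{|S|}\, e^{-p\,k_0(|S|-k_0)}$
for $k\in[k_0,\lfloor|S|/2\rfloor]$, where $k(|S|-k)$ is \emph{monotone} on this range, so its minimum is attained at $k=k_0$. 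With $k_0=|S|/2-\epsilon n$ and $|S|\geq cn$, one has $k_0(|S|-k_0)=|S|^2/4-\epsilon^2 n^2\geq(c^2/4-\epsilon^2)n^2$, which is $\Theta(n^2)$ provided $\epsilon<c/2$, and then $p\,k_0(|S|-k_0)=\Theta(n\log n)$ swamps the $2^{|S|}$ factor, as you say. This is a cosmetic fix, not a structural problem.

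The scope caveat: your ratio argument only gives a uniform bound of $o(1)$ for $k\leq k_0$ when $\epsilon$ can be chosen so that $n^{1-\Theta(\epsilon np/\log n)}\to 0$ while still satisfying $\epsilon<c/2$; with $p\geq 3\log n/n$ this roughly requires $c>1/3$. That is fine for the only place the lemma is used in the paper (there $|S|>2n/5$, so $c\geq 2/5$), and for smaller $c$ the claimed bound $O(ne^{-cnp})$ is anyway vacuous unless $p$ is correspondingly larger, but as you present it the argument silently assumes the specific parameters from Theorem~\ref{big-p} rather than proving the lemma in the stated generality. It would be cleaner to state at the outset that we may assume $ne^{-cnp}\to 0$ (otherwise the bound is trivial) and let the threshold $k_0$ depend on $c$.

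One further small imprecision: the $k=1$ term $|S|(1-p)^{|S|-1}$ is the union-bound \emph{upper} estimate for the isolated-vertex probability, so bounding the $k\geq 2$ tail by a constant multiple of it directly gives $\prob{S \text{ not connected}}=O(ne^{-cnp})$, which is what is used; to also justify the intermediate claim $\prob{\text{not connected}}=O(\prob{\text{isolated vertex}})$ you should compare the $k\geq 2$ tail to the single-vertex \emph{lower} bound $(1-p)^{|S|-1}\leq\prob{\text{isolated vertex}}$, which your estimates in fact also deliver (the tail is $o\bigl((1-p)^{|S|-1}\bigr)$, not merely $O\bigl(|S|(1-p)^{|S|-1}\bigr)$).
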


Notice that for every colour class $C_i$ and every state $g(S)$, the intersection $g(S)\cap C_i$ is either equal to $S\cap C_i$ or its complement $C_i \setminus S$.
By well-known estimates on large deviation in binomial distribution, we observe that a.a.s. it is true that for almost every colour class $C_i$, we have $|S\cap C_i|\sim |C_i|/2$.
Therefore, a.a.s. it is true that $|g(S)|> 2n/5$ for every state $g\in \langle\mathcal{M}\rangle$.
Furthermore, the orbit of $S$ is of size $2^m$, where all moves only depend on the chosen equitable colouring of $G$ and not on $S$. The crucial observation here is that for any $g\in \langle\mathcal{M}\rangle$, the distribution of $g(S)$ is the same as that of $S$. Thus, by the union bound and  Lemma~\ref{Csubsets}, the probability that the triple $(G,S,\mathcal{M})$ is not a legal system is at most
\[\sum_{g \in\langle\mathcal{M}\rangle} \prob{g(S) \text{ is not connected}}\leq o(1)+ \exp\left(\frac{np}{\log{(np)}}-\frac{2}{5}np+\log{n} \right)=o(1).\] 	
\end{proof}

\section{Construction}
\label{sec:construction}

The aim of this section is to outline our recipe to construct a legal system for $G \sim \mathcal{G}(n,p)$. The core idea behind the construction is the same as in \S \ref{WeakBound}. 
Ideally, we could simply choose a random initial set $S$, where each vertex in $G$ is added to the set with probability $\frac{1}{2}$. Then, the move at each vertex $v$ would be the colour class of vertex $v$ for an equitable colouring $C_1,\dots,C_m$ with $O(\log{n}/\log{\log{n}})$ colours, which we know exists w.h.p. from Theorem \ref{K-P}. This is the approach taken in the proof of Theorem \ref{big-p}, but it does not work for all $p$ in the range of Theorem \ref{Main}.

The main obstruction in this range are vertices with only few neighbours in either $S\cap C_i$ or $C_i\setminus S$ for many of the colour classes $C_i$. This could happen for the obvious reason that a vertex simply has very few neighbours in $G$, or it is an unlikely (and unlucky, for that particular vertex) choice of the random set $S$. The idea is to show that one may deterministically modify our initial random set $S$ to take care of the problematic vertices. It is in this  sets of vertices and their neighbourhoods that the modifications take place. The construction is as follows.

\begin{itemize}
    \label{sketch}
    \item Let $D_0$ denote vertices of degree at most $\frac{\log{n}}{100}$. Assign two unique neighbours to each vertex of $D_0$. Call the set of such neighbours $N_0$, and set $V'=V(G) \setminus D_0$.
    \item Partition $V'$ into large almost equitable independent sets $C_1, C_2, \ldots, C_m$ with $m\sim  \frac{np}{{\log{np}}}\leq(1+o(1))\frac{\log n}{\log{\log{n}}} $. We can do this by first partitioning $V$ into equitable colour classes and then taking away vertices in $D_0\cup N_0$ as the size of this set will be negligible compared to the size of the colour classes.
    
    \item Assign $+$ and $-$ signs to vertices of $G$ independently at random with probability $\frac{1}{2}$ and let $C_i^{+}=\{ v \in C_i:~ \sign(v)= +\}$ and $C_i^{-}=\{ v \in C_i :~ \sign(v)= -\}$.
    \item Define the function $\kappa: 2^{V}\to \NN$ as 
    \[\kappa(U)= \min_{\sigma \in \{+,-\}^{m}}\sum_{i} \left|U\cap C_i^{\sigma(i)}\right|,\]
    and set $D_1=\left\{ v \in V' : \kappa(N(v))< \log{\log{n}}^2\right\}$. As before we assign a pair of unique neighbours to each vertex in $D_1$ with the further property that they both lie on the same colour class and not in $D_0\cup N_0$. We call this set of neighbours $N_1$.

    \item
    We reassign to these pairs of vertices in $N_0$ and $N_1$ signs $+$ and $-$, so that for each pair one vertex is assigned $+$, and the other with $-$.
    Set $V''= V'\setminus D_1$.
    \item For every vertex $v \in V''$, set $M_v= C_i$ for the unique $i$ such that $v\in C_i$, for every $v \in D_0\cup D_1$ set $M_v=\{v\}$ and for for every $v \in N_0$ set $M_v={u,v}$ where $u$ is the unique vertex in $N_0$ such that $N(u)\cap N(v)\neq \emptyset$. Furthermore, we set our initial activated set to be $S=\{v \in V: \sign(v)=+\}$.
    \end{itemize}
\begin{figure}[h]
  
  \centering
  \vspace{0.5cm}
    \includegraphics[width=0.4\textwidth]{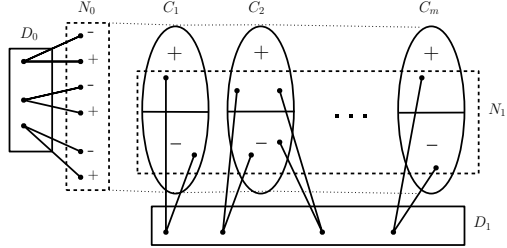}
    \caption{Picture of construction}
\end{figure}

\section{Proof of Main Theorem}
We will tackle the proof of the main theorem as follows: first we will give a small list of deterministic properties of a graph (which we call pseudorandom properties) that are sufficient to guarantee that the construction in the previous section indeed yields a legal system a.a.s. Finally we will complete the proof by showing that a random graph (at the appropriate density) a.a.s. presents all of the required pseudorandom properties. A caveat: there are two independent probability spaces at play in our approach: one is given by the random graph, and the other by the random 2-colouring in the construction. The first a.a.s. statement above is with respect to the latter space and the second with respect to the former.

\subsection{Sparse pseudorandom graphs}
\label{sec:pseudorandom}

\begin{theorem}
\label{pseudorandom}
For sufficiently large integer $n$, define $t=2n \log \log n / \log n$ and let $G$ be an $n$-vertex graph with $D_0:=\lt\{v\in V:~d(v)\leq \log n /100\rt\}$ satisfying the following:
\begin{enumerate}[(i)]
\item
\label{mindeg}
$\delta(G)\geq 2$,
\item
\label{maxdeg}
$\Delta(G) =O(\log n)$,

\item
\label{sized0}
$|D_0|\leq n^{0.9}$,

\item
\label{noshortpath}
there exists no non-trivial path of length at most $4$ with both endpoints in $D_0$,
\item
\label{chromatic}
$m:=\chi_{=}(G) =O(\log n / \log \log n)$,
\item
\label{mindeglarge}
every set $A\subseteq V(G)$ satisfying $\delta(G[A])> \log \log n^2/2$ is of size at least $t$,
\item
\label{twolargesets}
between any two disjoint sets $A,B\subseteq V(G)$ of sizes at least $t$, there exists an edge in $G$ between $A$ and $B$.
\item
\label{k23}
$G$ is $K_{2,3}$-free.
\end{enumerate}
Then $G$ has a legal system. 
\end{theorem}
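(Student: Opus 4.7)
The plan is to execute the randomised construction of Section~\ref{sec:construction} with an i.i.d.\ uniform sign vector $\sigma:V(G)\to\{+,-\}$, and show that with positive probability the resulting triple $(G,S,\mathcal{M})$ is a legal system; the theorem then follows by the probabilistic method. Set $P:=D_0\cup D_1\cup N_0\cup N_1$.

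The first step is to control the defect set $D_1$. Condition (v) yields an equitable colouring $C_1,\dots,C_m$ with $m=O(\log n/\log\log n)$; condition (ii) gives $\Delta(G)=O(\log n)$; and every $v\in V'$ has $d(v)\ge \log n/100$ by definition. For such $v$, $\kappa(N(v))=\sum_i\min(|N(v)\cap C_i^+|,|N(v)\cap C_i^-|)$ is a sum of $m$ independent per-class contributions of total expectation $\Omega(\log n)$, and a Chernoff estimate gives $\mathbb{P}(\kappa(N(v))<(\log\log n)^2)=n^{-\omega(1)}$. Summing over pairs $(v,w)$ with $w\in N(v)$, one finds that with high probability $N(v)\cap D_1=\emptyset$ for every $v$. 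Condition (iv) gives $|N(v)\cap(D_0\cup N_0)|\le 3$ automatically; together with a greedy placement of the $N_1$-partners (feasible thanks to the smallness of $|D_1|$ and conditions (ii) and (viii)), this arranges $|N(v)\cap P|\le C$ for an absolute constant $C$ and every $v$. Moreover, the pair partners are chosen inside a common colour class: automatic for $N_1$ by construction, and for $N_0$ by pigeonhole whenever $d(d)>m$.

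Fix such a realisation. Since $V=\bigcup_iC_i\cup D_0\in\langle\mathcal{M}\rangle$ (compose every colour-class and singleton move), the orbit of $S$ is closed under complementation, so it suffices to show $g(S)$ is connected and non-empty for every $g\in\langle\mathcal{M}\rangle$. Non-emptiness is immediate from Chernoff on the colour-class sign counts. Suppose for contradiction that $g(S)=A\sqcup B$ with no $A$--$B$ edge in $G$, WLOG $|A|\le|B|$. Condition (vii) forces $|A|<t:=2n\log\log n/\log n$, and if $|A\setminus P|\ge t$, applying condition (vii) to $A\setminus P$ and $B\setminus P$ (which has size $\ge n/4-|P|\ge t$) produces an $A$--$B$ edge, a contradiction; hence $|A\setminus P|<t$. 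Writing $\tau(g)\in\{+,-\}^m$ for the colour-class sign pattern induced by $g$, the key estimate for any $u\in V'\setminus D_1$ is
\[
|N(u)\cap g(S)|\;\ge\;\sum_i|N(u)\cap C_i^{\tau_i(g)}|-|N(u)\cap P|\;\ge\;\kappa(N(u))-C\;\ge\;(\log\log n)^2-C.
\]
If $A\setminus P\ne\emptyset$, condition (vi) applied to $A\setminus P$ gives a bulk vertex $u\in A\setminus P$ with $|N(u)\cap(A\setminus P)|\le(\log\log n)^2/2$, and the estimate combined with the bound $|N(u)\cap A|\le|N(u)\cap(A\setminus P)|+|N(u)\cap P|\le(\log\log n)^2/2+C$ yields a contradiction. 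If instead $A\subseteq P$, condition (vi) applied to $A$ yields $w\in A$ with $|N(w)\cap A|\le(\log\log n)^2/2$. When $w\in N_0\cup N_1\subseteq V'\setminus D_1$ the estimate with $u:=w$ contradicts directly; otherwise $w\in D_0\cup D_1$, and the common-colour-class pair move together with the opposite-sign reassignment force exactly one of $w$'s two partners $u\in A\cap(N_0\cup N_1)$ into $g(S)$, so the estimate applied to $u$ gives $|N(u)\cap A|\ge(\log\log n)^2-C$, while $A\subseteq P$ forces $|N(u)\cap A|\le|N(u)\cap P|\le C$, the final contradiction.

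The main obstacle is securing the uniform bound $|N(v)\cap P|\le C$: this requires the concentration estimate $\mathbb{P}(w\in D_1)=n^{-\omega(1)}$ together with a careful greedy placement of $N_1$-partners using condition (viii) to limit how many $D_1$-partners any single vertex can see. A secondary subtlety is arranging both $N_0$-partners of each $D_0$-vertex inside a common colour class; this is automatic when $d(d)>m$ by pigeonhole, but the few $D_0$-vertices of minimum degree $2$ whose two neighbours lie in distinct colour classes require a mild extension of the pair-move definition so as to preserve the opposite-sign invariant under arbitrary colour-class flips.
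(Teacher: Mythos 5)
The overall outline of your proposal matches the paper's argument --- random signing, defect sets $D_0, D_1$, pair-moves inside a colour class, and a connectivity argument culminating in conditions (vi) and (vii) --- but there is a genuine gap at the crucial place where the paper does nontrivial work.

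You claim that for every $v\in V'$, ``a Chernoff estimate gives $\mathbb{P}(\kappa(N(v))<(\log\log n)^2)=n^{-\omega(1)}$,'' and you then union-bound over all neighbour pairs $(v,w)$ to conclude that a.a.s.\ $N(v)\cap D_1=\emptyset$ for every $v$. This is false: vertices in $V'$ can have degree as low as $\log n/100$, and $\kappa(N(v))$ stochastically dominates essentially a $\mathrm{Bin}\big(\Theta(\log n),\tfrac12\big)$ variable (Claims~\ref{domin}--\ref{coupling} and Lemma~\ref{sized1}), whose lower tail at $o(\log n)$ is $e^{-\Theta(\log n)}=n^{-c}$ for a \emph{small fixed} constant $c<1$, not $n^{-\omega(1)}$. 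Since $\delta(G)\ge 2$ implies $\bigcup_v N(v)=V$, your conclusion would force $D_1=\emptyset$ a.a.s., which fails: the expected size of $D_1$ grows polynomially. The union bound over the $\Theta(n\log n)$ pairs therefore does not close. The paper's Lemma~\ref{noshortpath1} replaces this step with a genuinely different and necessary mechanism: it only shows that every vertex has at most $1000$ length-$2$ paths to $D_1$, and it gets there by exploiting near-independence of the events $\{u_i\in D_1\}$ for $1000$ vertices $u_i$ sharing almost no neighbours (via $K_{2,3}$-freeness and $\Delta=O(\log n)$), multiplying $1000$ probabilities each $\le n^{-1/300}$ to beat the $n^{1+o(1)}$-size union bound. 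Your downstream estimate $|N(v)\cap P|\le C$ and the final case analysis are in the right spirit, but they hinge on this incorrect concentration claim; to repair the proof you must follow something like the paper's ``few paths to $D_1$'' route rather than trying to make $D_1$ avoid all neighbourhoods.

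A secondary point: your assertion that ``condition (iv) gives $|N(v)\cap(D_0\cup N_0)|\le 3$ automatically'' should be justified more carefully for $v\notin D_0$ (the paper obtains the tighter bound $\le 1$ via the no-short-path condition), and your estimate for $u\in N_0\cup N_1$ tacitly assumes $N_0\cup N_1$ is disjoint from $D_1$, which is not immediate from the construction. These are smaller issues, but worth making explicit once the main gap is fixed.
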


Again, we start by assigning either $+$ or $-$ to every vertex of $G$ uniformly at random.
As mentioned earlier, the subtle point where the proof of Theorem~\ref{big-p} cannot be applied here, are the few vertices that behave irregularly. Following the description in the sketch above, let us choose two neighbours $v_+, v_-$ for every vertex $v\in D_0$ such that no vertex is chosen twice - this is possible because of Properties~\eqref{mindeg} and~\eqref{noshortpath}. Denote the set of all such chosen neighbours by $N_0$, and reassign the signs of vertices in $N_0$ according to their subscripts. 

Furthermore, let us fix an arbitrary equitable colouring of $G$ with $m$ colours, and denote the colour classes by $D_1, \ldots, D_m$, set $C_i=D_i\setminus D_0$ and observe that $|C_i|=(1-o(1))|D_i|$ by property (\ref{sized0}). As described in the sketch, for every $i\in [m]$ we define $C_i^+$ and $C_i^-$ to be the set of all vertices in $C_i$ with the corresponding sign. We would like to have a function that counts the minimum number of neighbours of any vertex in a set that contains either $C_i^+$ or $C_i^-$ for every $i\in [m]$. Towards that aim, we define $V'=V\setminus D_0$, $\kappa: 2^{V}\to \NN$ as 
    \[\kappa(U)= \min_{\sigma \in \{+,-\}^{m}}\sum_{i} \left|U\cap C_i^{\sigma(i)}\right|,\]
    and set $D_1=\left\{ v \in V' : \kappa(N(v))< \log{\log{n}}^2\right\}$.

In order to work with the exceptional vertices in $D_1$, we need the following lemma, analogous to Property
~\eqref{noshortpath} for $D_0$. We remark here that the set $D_1$ is a \emph{random subset} of $V'$ as it depends on the intial choice of 2-colouring.

\begin{lemma}
\label{noshortpath1}
A.a.s. for every vertex $v\in V$, there are at most $1000$ paths of length $2$ between $v$ and vertices in $D_1$.
\end{lemma}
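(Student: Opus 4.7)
My plan has three steps: reduce the path count to a vertex count via the $K_{2,3}$-free hypothesis, obtain a polynomial-in-$n$ tail bound on $\prob{u\in D_1}$ for a single vertex by a careful exponential moment computation, and close the argument via a high-moment bound whose main novelty is the control of dependencies using the $K_{2,3}$-free structure.

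For the reduction, I will use property~\eqref{k23}: for every $u\neq v$ one has $|N(v)\cap N(u)|\leq 2$, for otherwise $v$, $u$ and three common neighbours form a forbidden $K_{2,3}$. Consequently the number of length-$2$ paths from $v$ to vertices of $D_1$ is at most $2Y_v$ with $Y_v := |N^{\leq 2}(v)\cap D_1|$, so it suffices to show that a.a.s.\ $Y_v\leq 500$ for every $v$. Property~\eqref{maxdeg} also gives the crude bound $|N^{\leq 2}(v)|\leq 1+\Delta(G)+\Delta(G)^{2}=O(\log^{2}n)$.

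Next I will bound $q:=\max_{u\in V'}\prob{u\in D_1}$. Writing $\kappa(N(u))=\sum_{i}Z_{i}$ with $Z_{i}=\min(|N(u)\cap C_{i}^{+}|,|N(u)\cap C_{i}^{-}|)$ independent across colour classes, I express $Z_{i}=n_{i}/2-|W_{i}|$ for a centered binomial $W_{i}$, and Hoeffding's MGF gives
\[\expec\!\lt[e^{-\lambda Z_{i}}\rt]\;\leq\;2\lt(\tfrac{1+e^{-\lambda}}{2}\rt)^{n_{i}}.\]
Taking the product over the $m=O(\log n/\log\log n)$ colour classes (property~\eqref{chromatic}), using $\sum_{i}n_{i}=d(u)\geq\log n/100$ (because $u\in V'$), and optimising $\lambda$ in Markov's inequality applied to $e^{-\lambda\kappa(N(u))}$, the standard KL-type computation yields $q\leq n^{-c_{0}+o(1)}$ for an absolute constant $c_{0}>0$ (explicitly one finds $c_{0}=(\log 2)/100$, but the argument only needs $c_{0}>1/500$).

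The third step handles the dependencies. For distinct $u_{1},\dots,u_{r}\in N^{\leq 2}(v)$, property~\eqref{k23} gives $|N(u_{j})\cap N(u_{k})|\leq 2$, so for $I:=\bigcup_{j<k}(N(u_{j})\cap N(u_{k}))$ one has $|I\cap N(u_{j})|\leq 2(r-1)$. Conditioning on the signs of $I$ makes the events $\{u_{j}\in D_{1}\}$ mutually independent, and each conditional probability is at most $2^{|I\cap N(u_{j})|}q\leq 4^{r-1}q$. Averaging,
\[\prob{u_{1},\dots,u_{r}\in D_{1}}\;\leq\;(4^{r-1}q)^{r},\qquad\expec\!\lt[\binom{Y_{v}}{r}\rt]\;\leq\;\binom{|N^{\leq 2}(v)|}{r}(4^{r-1}q)^{r}.\]
Choosing $r=500$, the factor $4^{r(r-1)}$ is an absolute constant and $\binom{|N^{\leq 2}(v)|}{500}=O((\log n)^{1000})=n^{o(1)}$, so $\prob{Y_{v}\geq 500}\leq \expec[\binom{Y_{v}}{500}]\leq n^{-500c_{0}+o(1)}$; a union bound over the $n$ choices of $v$ yields overall failure probability $n^{1-500c_{0}+o(1)}=o(1)$.

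The main obstacle is the polynomial tail bound on $q$ in the second step: the MGF must be controlled uniformly across heterogeneous colour-class sizes $n_{i}$, and the proof relies on properties~\eqref{mindeg}, \eqref{maxdeg} and \eqref{chromatic} cooperating (the minimum degree bound $\log n/100$ provides the negative contribution $-d\log 2$, the logarithmic chromatic bound keeps the additive loss $m\log 2$ negligible, and the maximum degree bound limits the second-neighbourhood size). The dependence management of step three via the $K_{2,3}$-free hypothesis is, by comparison, mechanical once the single-vertex bound is in place.
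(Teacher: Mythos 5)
Your proposal is correct, and the high-level architecture matches the paper's: a polynomial-in-$n$ tail bound on $\prob{u\in D_1}$ for a single vertex, an exploitation of $K_{2,3}$-freeness to decouple the events $\{u_j\in D_1\}$ for $u_j$ near $v$, and a union bound over $v$ and over tuples (phrased by you as a moment bound $\prob{Y_v\geq r}\leq \expec\bigl[\binom{Y_v}{r}\bigr]$, which amounts to the same thing). The two technical steps where you diverge are worth pointing out. For the single-vertex tail, the paper (Claims~\ref{domin} and~\ref{coupling} plus Lemma~\ref{sized1}) first shows via explicit coupling that $\kappa(N(u))$ stochastically dominates $\Bin\bigl(\sum_i\lfloor n_i/2\rfloor,\tfrac12\bigr)$ and then bounds that binomial's lower tail; you bound the moment generating function of $\kappa(N(u))$ directly, which is equally valid and in fact avoids the factor-of-two loss incurred by passing to $\Bin(\lfloor d(u)/2\rfloor,\tfrac12)$, so your exponent $(\log 2)/100$ is about twice the paper's. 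For the dependence between vertices, the paper subtracts the pairwise intersections from each $N(u_j)$ to obtain disjoint sets $U_j$, and then uses the monotonicity $\kappa(U_j)\leq\kappa(N(u_j))$ (so $X'(u_j)\Rightarrow X''(u_j)$) to pass to exactly independent events; you instead condition on the signs of the overlap $I$ and pay a Bayes factor of $2^{|I\cap N(u_j)|}\leq 4^{r-1}$ per vertex. The paper's version is a touch cleaner since it dispenses with this bounded multiplicative constant, but both are sound and lead to the same estimate. One small looseness on your side: you write $\sum_i n_i=d(u)$, whereas in fact $\sum_i n_i = |N(u)\cap V'| = d(u) - |N(u)\cap D_0| \geq d(u)-1$ by Property~\eqref{noshortpath}; this is immaterial for the bound.
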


Before we prove Lemma~\ref{noshortpath1}, we need to make the following technical statements.

\begin{claim}
\label{domin}
Let $Y\sim \Bin(m, \frac{1}{2})$ where $m\geq 1$ and $X=\min\{Y, m-Y\}$ then $X$ dominates $Z$ where $Z\sim\Bin\left(\left\lfloor\frac{m}{2}\right\rfloor, \frac{1}{2}\right)$, that is for all $t\geq 0$ we have that 
\[\prob{X\leq t}\leq \prob{Z\leq t}.\]
\end{claim}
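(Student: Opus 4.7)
The plan is to construct an explicit coupling on the uniform space $\{-1,+1\}^m$, realising $Y$ as the number of $+1$'s among $m$ i.i.d.\ fair $\pm 1$ flips $W_1,\ldots,W_m$, and on the same probability space to build a random variable $E$ with the law of $Z$ satisfying $E\le X$ pointwise; the asserted stochastic inequality will then follow at once.

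First, I would pair the flips into $\lfloor m/2\rfloor$ disjoint pairs, leaving one flip unpaired when $m$ is odd, and let $E$ denote the number of \emph{mixed} pairs (exactly one $+1$ and one $-1$). Since each pair is mixed with probability $\frac{1}{2}$ independently of the others, $E\sim\Bin(\lfloor m/2\rfloor,\frac{1}{2})$, which is exactly the law of $Z$.

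Second, I would establish the pointwise bound $E\le X$ by a simple decomposition. Let $B$ count the all-$+1$ pairs, and let $R_+,R_-\in\{0,1\}$ denote the contributions of the leftover flip (if any) to $Y$ and to $m-Y$ respectively, so that $R_+=R_-=0$ when $m$ is even and $R_++R_-=1$ when $m$ is odd. Then
\[ Y = E + 2B + R_+, \qquad m-Y = E + 2\left(\lfloor m/2\rfloor - E - B\right) + R_-. \]
Every summand other than $E$ is non-negative, so both $Y\ge E$ and $m-Y\ge E$, whence $X=\min(Y,m-Y)\ge E$. Consequently $\prob{X\le t}\le\prob{E\le t}=\prob{Z\le t}$ for every $t\ge 0$, as claimed.

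The only point that takes a moment of care is the parity bookkeeping in the odd case: the unpaired flip must contribute a non-negative amount to \emph{both} $Y$ and $m-Y$, which is automatic from $R_+,R_-\in\{0,1\}$, so the argument goes through uniformly in the parity of $m$. The main creative step is simply identifying the pairing as the right coupling; once it is written down, everything else is transparent and avoids any combinatorial manipulation of partial binomial sums.
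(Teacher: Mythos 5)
Your proof is correct, and it takes a genuinely different route from the paper's. The paper argues by induction on $m$ in steps of two: it couples $Y = Y' + W$ with $Y'\sim \Bin(m_0,\frac{1}{2})$ and $W\sim \Bin(2,\frac{1}{2})$, uses the superadditivity $\min\{a+c,\,b+d\}\ge \min\{a,b\}+\min\{c,d\}$ to get $X\ge X_1+X_2$ (with $X_1=\min\{Y',m_0-Y'\}$, $X_2=\min\{W,2-W\}$), applies the induction hypothesis to each summand, and invokes preservation of stochastic dominance under independent sums. You instead build a single global coupling on $\{-1,+1\}^m$, pairing flips and letting $E$ count the mixed pairs; the identities $Y = E + 2B + R_+$ and $m-Y = E + 2(\lfloor m/2\rfloor - E - B) + R_-$ with all side terms non-negative give $X\ge E$ pointwise, and $E\sim \Bin(\lfloor m/2\rfloor,\frac{1}{2})$ by pairwise disjointness, closing the argument in one step. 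Your approach essentially unrolls the paper's induction into an explicit one-shot coupling; it is shorter, avoids invoking the general fact that domination is preserved under independent sums, and makes the parity bookkeeping transparent. The paper's inductive formulation is marginally more modular (the two-flip step and the base case are each trivial to check), but at the cost of a typo-prone index chase. Both are sound; yours is arguably the cleaner write-up.
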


\begin{proof}
We argue by induction. For $m=1,2$ the claim obviously holds. Assuming that it holds for $m=m_0$, we show that it also holds for $m=m_0+2$. Observe that $Y\sim Y'+W$ where $Y'\sim \Bin(m_0,\frac{1}{2})$ and $W\sim \Bin(2,\frac{1}{2})$ are independent. Furthermore, observe that $X$ dominates $X_1+X_2$ where $X_1=\min\{Y',m_0-Y'\}$ and $X_2=\min\{W,2-W\}$. By the induction hypothesis, letting $Z_1\sim \Bin\left(\left\lfloor\frac{m}{2}\right\rfloor,\frac{1}{2}\right)$ and $Z_2\sim \Bin(1,\frac{1}{2})$ be independent random variables, we know that $X_i$ dominates $Z_i$ for $i=1,2$. Using the independence of $X_1$ and $X_2$ and of $Z_1$ and $Z_2$ it follows that $X_1+X_2$ dominates $Z_1+Z_2$ and hence $X$ also dominates $Z_1+Z_2\sim \Bin\left(\left\lfloor\frac{m+2}{2}\right\rfloor\right)$ as claimed.
\end{proof}

\begin{claim}
\label{coupling}
Let $Y_1, \ldots, Y_k$ be independent random variables with $Y_i \sim \Bin\left(m_i, \frac{1}{2}\right)$ and $m_i\geq 1$ for every $i\in [k]$. Denote 
$X=\sum_{i=1}^{k}\min\{Y_i, m_i-Y_i\}$.
Then $X$ dominates $Z\sim \Bin\left(\sum_i\left\lfloor\frac{m_i}{2}\right\rfloor , \frac{1}{2}\right)$.
\end{claim}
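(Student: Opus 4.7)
The plan is to bootstrap Claim~\ref{domin} coordinatewise and then use the standard fact that stochastic domination is preserved under sums of mutually independent random variables. Concretely, set $X_i=\min\{Y_i,m_i-Y_i\}$, so $X=\sum_{i=1}^k X_i$, and let $Z_i\sim \Bin(\lfloor m_i/2\rfloor,1/2)$ be mutually independent (also independent of everything else). The target variable $Z$ then has the same distribution as $\sum_{i=1}^k Z_i$, because the sum of independent $\Bin(\cdot,1/2)$ variables is again a binomial with success probability $1/2$ and the trial counts added.

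First I would invoke Claim~\ref{domin} once for each index $i$: it furnishes, for every $i\in[k]$, a coupling of $X_i$ and $Z_i$ on a probability space $(\Omega_i,\mathcal{F}_i,\mathbb{P}_i)$ under which the marginal of $\tilde X_i$ is the law of $X_i$, the marginal of $\tilde Z_i$ is the law of $Z_i$, and $\tilde Z_i\leq \tilde X_i$ holds pointwise. Then I would form the product probability space $\prod_{i=1}^k (\Omega_i,\mathcal{F}_i,\mathbb{P}_i)$. On this product space the random vectors $(\tilde X_i,\tilde Z_i)$ are mutually independent across $i$, so in particular $(\tilde X_1,\ldots,\tilde X_k)$ has the same joint law as $(X_1,\ldots,X_k)$ (a product of the correct marginals, using that the $Y_i$ are independent), and similarly $(\tilde Z_1,\ldots,\tilde Z_k)$ has the same joint law as $(Z_1,\ldots,Z_k)$.

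Summing the pointwise inequalities $\tilde Z_i\leq \tilde X_i$ yields $\sum_i \tilde Z_i\leq \sum_i \tilde X_i$ almost surely on the product space. Since $\sum_i \tilde X_i\stackrel{d}{=}X$ and $\sum_i \tilde Z_i\stackrel{d}{=}\sum_i Z_i\stackrel{d}{=}Z$, this immediately gives, for every $t\geq 0$,
\[
\mathbb{P}(X\leq t)=\mathbb{P}\Bigl(\sum_i \tilde X_i\leq t\Bigr)\leq \mathbb{P}\Bigl(\sum_i \tilde Z_i\leq t\Bigr)=\mathbb{P}(Z\leq t),
\]
which is precisely the assertion that $X$ dominates $Z$. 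There is no serious obstacle here: the only point that needs attention is to make sure the couplings are assembled independently across $i$ so that the correct joint distributions of $(X_1,\ldots,X_k)$ and $(Z_1,\ldots,Z_k)$ are reproduced; once that is done the inequality passes through the sum trivially.
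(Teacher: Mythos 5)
Your proof is correct and follows essentially the same route as the paper: apply Claim~\ref{domin} to each coordinate, use independence across $i$, and conclude by the standard fact that stochastic domination is preserved under sums of independent random variables. The only difference is that you spell out the coupling/product-space construction explicitly, whereas the paper states the sum-preservation step without elaboration.
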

\begin{proof}
Let $X_i=\min\{Y_i, m_i-Y_i\}$, then the $X_i$'s are independent random variables and $X=\sum_i X_i$. By Claim~\ref{domin}, we know that there exist independent random variables $Z_i\sim \Bin\lt(\lt\lfloor\frac{m_i}{2}\rt\rfloor, \frac{1}{2}\rt)$ such that each $X_i$ dominates $Z_i$ respectively. By independence of the $Z_i$'s, we then have that $X$ dominates $\sum_i Z_i\sim Z$.
\end{proof}
We can finally turn back our attention to the random set $D_1$.
\begin{lemma}
\label{sized1}
For any $U\subset V'$, let $X=X(U)=\kappa(N(u))$. Suppose that $|U|\geq \frac{\log n}{101}$, then, $$\prob{X(U)\leq 2 \log \log n^2}\leq n^{-1/300}.$$ 
\end{lemma}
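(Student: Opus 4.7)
The plan is to rewrite $\kappa(U)$ as a sum of independent minima, invoke the binomial domination of Claim~\ref{coupling}, and then close with a routine binomial lower-tail estimate.

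First, since the colour classes $C_i$ are pairwise disjoint and the signs are assigned independently to all vertices, the inner sum in the definition of $\kappa$ decouples under the minimum over $\sigma\in\{+,-\}^{m}$, yielding
\[
\kappa(U) \;=\; \sum_{i=1}^{m}\min\bigl\{|U\cap C_i^+|,\,|U\cap C_i^-|\bigr\}.
\]
Writing $m_i:=|U\cap C_i|$ and $Y_i:=|U\cap C_i^+|$, the $Y_i$ are independent with $Y_i\sim \Bin(m_i,1/2)$, and indices with $m_i=0$ contribute $0$ deterministically. Applying Claim~\ref{coupling} to the remaining indices shows that $\kappa(U)$ stochastically dominates $Z\sim \Bin(N,1/2)$ with
\[
N \;=\; \sum_{i:\,m_i\geq 1}\lfloor m_i/2\rfloor \;\geq\; \tfrac{|U|-m}{2}.
\]

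Next, property~\eqref{chromatic} of the pseudorandom graph gives $m=O(\log n/\log\log n)$, which is negligible compared to $|U|\geq \log n/101$, so $N\geq \log n/203$ for all sufficiently large $n$. The proof then reduces to bounding $\prob{Z\leq 2(\log\log n)^2}$. Using the standard estimate $\binom{N}{j}\leq (eN/j)^{j}$ and summing over $j\leq t$, one obtains
\[
\prob{Z\leq t}\;\leq\;(t+1)\Bigl(\frac{eN}{t}\Bigr)^{\!t}2^{-N}
\]
for $t\leq N/2$. Substituting $t=2(\log\log n)^2$ and $N\geq \log n/203$, the combinatorial factor is $\exp\bigl(O((\log\log n)^{3})\bigr)$, which is dwarfed by $2^{-N}\leq n^{-\log 2/203}$; since $\log 2/203>1/300$, the overall probability is at most $n^{-1/300}$ for large $n$.

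The main obstacle is arithmetical rather than conceptual: the hypothesis $|U|\geq \log n/101$ must be comfortably larger than the number $m$ of colour classes so that the coupled parameter $N$ is of order $\log n/200$, which is exactly what is needed to extract a bound of $n^{-1/300}$ from the binomial lower tail. Once the decoupling into independent $\min\{Y_i,m_i-Y_i\}$ terms and the reduction to a single binomial via Claim~\ref{coupling} are in place, the concluding computation is purely mechanical.
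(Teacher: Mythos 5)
Your proof follows the same route as the paper's: decouple $\kappa(U)$ into a sum of independent terms $\min\{Y_i,m_i-Y_i\}$, invoke Claim~\ref{coupling} to dominate by a single binomial, and finish with a binomial lower-tail estimate. The one place you are actually \emph{more} careful than the paper is the binomial parameter: the paper asserts domination by $\Bin(|U|/2,1/2)$ and then feeds in $\log n/202$, whereas Claim~\ref{coupling} only gives $\Bin\bigl(\sum_i\lfloor m_i/2\rfloor,\tfrac12\bigr)$, and you correctly observe that $\sum_i\lfloor m_i/2\rfloor\geq(|U|-m)/2$, absorb the $m=O(\log n/\log\log n)$ loss using Property~\eqref{chromatic}, and take $N\geq\log n/203$ to keep the margin $\log 2/203>1/300$. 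Your explicit tail bound $(t+1)(eN/t)^t2^{-N}$ and the check that $\exp(O((\log\log n)^3))$ is swallowed by the slack $n^{\log 2/203-1/300}$ is exactly what the paper leaves unstated. In short: correct, same approach, with the small gap in the paper's sketch filled in.
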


\begin{proof}
By Claim~\ref{coupling}, we see that $X$ dominates $Y\sim \Bin \lt(|N(u)|/2, \frac{1}{2}\rt)$ and thus
\[\prob{X< 2\log\log n^2} \leq 
\prob{Y< 2\log\log n^2}
\leq 
\prob
{\Bin \lt( \log n/202, \frac{1}{2} \rt) < 2\log\log n^2}
<n^{-1/300}.\]
\end{proof}

We are now ready to prove Lemma~\ref{noshortpath1}.

\begin{proof}[Proof of Lemma~\ref{noshortpath1}]
By Property~\eqref{maxdeg}, every vertex $v\in V$ has $O(\log^2 n)$ vertices that are at distance at most $2$ from $v$. 
Therefore, if the statement of the lemma was to be wrong, by Property~\eqref{k23} there would be such $v$ where at least $1000$ of the $O(\log^2 n)$ vertices at distance at most $2$ from $v$ would be all in $D_1$. 
Although the events $X'(u_i)=\{u_i\in D_1 \}$ are not mutually independent, they are {\em almost} independent. 
Namely, for an arbitrary collection of $1000$ vertices $u_1, \ldots, u_{1000}$, 
the events 
\[X''(u_i) :=\mbox{``}\kappa\lt(U_i \rt) < 2 \log \log n^2 \mbox{''},\] where $U_i=N(u_i)\setminus \bigcup_{j\neq i}N(u_j)$ are mutually independent since $U_i\cap U_j=\emptyset$ for $i\neq j$. Furthermore  $X'(u_i) \implies X''(u_i)$ for every $i$. Finally, by Property~\eqref{k23} we see that
\[\lt|N(u_i)\setminus \bigcup_{j\neq i}N(u_j) \rt|> |N(u_i)|-2000 \geq \frac{\log n}{101},\]
and obtain
\[\prob{ \bigwedge_{i\leq 1000}X'(u_i)}
\leq 
\prob{\bigwedge_{i\leq 1000}X''(u_i)}
=
\prod_{i\leq 1000}\prob{ X''(u_i) }
<
\prod_{i\leq 1000}\prob{X(U_i)< 2\log\log n^2}
<n^{-1.1}.\]

The lemma now follows by a union bound over all choices for $v\in V$ and all choices of $1000$ vertices $u_i$ at distance at most $2$ from $v$.
\end{proof}

    As before we assign a pair of unique neighbours to each vertex in $D_1$ with the further property that they both lie on the same colour class.
    This is possible since by Lemma~\ref{noshortpath1}, for every $v\in V$ at most $1000$ vertices from $D_1$ have joint neighbours with $v$, and by Property~\eqref{k23} every such vertex has at most $2$ joint neighbours with $v$, whereas $v$ has a total of at least $\log n /100$ neighbours in $V$, out of which at most one is in $D_0\cup N_0$ by Property~\eqref{noshortpath}.
    As with the vertices in $N_0$, we
    assign to these two vertices signs $+$ and $-$, and set $V''= V'\setminus D_1$. 
    
    As described in the sketch, 
    for every vertex $v\in V''$, 
    we set $M_v= C_i$ 
    for the unique $i$ such that $v\in C_i$ for every $v \in D_0\cup D_1$ set $M_v=\{v\}$ and for for every $v \in N_0$ set $M_v={u,v}$ where $u$ is the unique vertex in $N_0$ such that $N(u)\cap N(v)\neq \emptyset$.

    To finish the proof, all that is left is to prove the following claim:

\begin{claim}
Let $S=\{v\in V: \sign(v)=+\}$, the triple $(G,S, \mathcal{M})$ is a legal system.
\end{claim}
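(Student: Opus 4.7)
My plan is to verify, for every $g\in \langle\mathcal{M}\rangle$, that $X:=g(S)$ and $V\setminus X$ are non-empty and connected. The first step is a structural decomposition: since the generators of $\mathcal{M}$ fall into three disjoint types---singletons $\{v\}$ with $v\in D_0\cup D_1$, pair moves $\{v_+,v_-\}$ for matched pairs in $N_0\cup N_1$, and color classes $C_i$---every element factors as $g=g_S+g_P+g_C$ in $\Z{2}^V$, with $g_S\subseteq D_0\cup D_1$, $g_P$ a union of matched pairs, and $g_C=\bigcup_{i\in I}C_i$ for some $I\subseteq [m]$. The set $I$ induces a sign pattern $\sigma\in\{+,-\}^m$, and on $V''=V\setminus(D_0\cup D_1)$, outside of the at most $|N_0|+|N_1|=o(n)$ pair-swapped vertices, the intersection $X\cap C_i$ coincides with $C_i^{\sigma(i)}$. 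This identifies the ``bulk''
\[
B \;:=\; \bigcup_{i=1}^{m} C_i^{\sigma(i)}\cap V''
\]
as the essential part of $X\cap V''$.

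The heart of the proof is then to show that $B$ is connected. Binomial concentration applied to the random sign assignment yields $|C_i^{\pm}|=(1\pm o(1))|C_i|/2$ for every $i$ a.a.s., so by property \eqref{sized0} both $|B|$ and $|V''\setminus B|$ are at least $t$. Now every $v\in V''$ satisfies $\kappa(N(v))\geq \log\log n^2$ by the definition $V''=V'\setminus D_1$, so $v$ has at least $\log\log n^2$ neighbors in $\bigcup_i C_i^{\sigma(i)}$; subtracting the at most $o(\log\log n^2)$ of these that lie in the exceptional sets $D_1\cup N_0\cup N_1$ (controlled using Lemma~\ref{noshortpath1}, property \eqref{k23}, and the expected size bound on $D_1$ coming from Lemma~\ref{sized1}) leaves at least $\tfrac{1}{2}\log\log n^2$ neighbors inside $B$. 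Consequently every connected component of $G[B]$ has minimum degree exceeding $\log\log n^2/2$, so property \eqref{mindeglarge} forces each such component to have size at least $t$, and property \eqref{twolargesets} then rules out more than one component. Hence $B$ is connected; the symmetric argument with $+\leftrightarrow -$ gives the same for $V''\setminus B$.

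It remains to glue in the outlier vertices of $D_0\cup D_1$ together with the few pair-swapped vertices of $V''$. For $v\in D_1$, the paired neighbors $v_+,v_-\in N_1$ lie in a common color class $C_j$ with opposite initial signs, and a two-line case analysis on whether the pair move and the $C_j$-move appear in $g$ shows that exactly one of $v_+,v_-$ lies in $X$ and the other in $V\setminus X$, regardless of whether the singleton move $\{v\}$ is in $g$; the $X$-neighbor, being a vertex of $V''$, is in turn connected to $B$ by its $\kappa$-neighbors. The same case analysis handles $v\in D_0$, provided the $N_0$ pair is also chosen inside a single color class---a strengthening of the construction that is possible by properties \eqref{mindeg}, \eqref{noshortpath}, and a local recoloring that preserves the chromatic estimate of property \eqref{chromatic}. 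Combined with $|B|,|V''\setminus B|>0$, this gives $X$ connected and non-empty; the claim for $V\setminus X$ follows by rerunning the argument with $V\setminus S$ in place of $S$, using the fact that swapping all signs preserves the pair structure and color classes.

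The main obstacle I anticipate is the $D_0$ bookkeeping. If $v\in D_0$ has degree exactly $2$ and its two neighbors fall in different color classes, then for suitable choices of $g$ the singleton move $\{v\}$ can place $v$ in $X$ while both of its neighbors end up in $V\setminus X$, destroying connectedness. Ensuring this never happens---either by strengthening the construction of $N_0$ to respect the coloring, or by a local recoloring of $G$ that absorbs the inconvenient pair into one common class---is the delicate point of the proof, and will require a careful use of properties \eqref{mindeg}, \eqref{noshortpath}, and the slack in the equitable coloring guaranteed by property \eqref{chromatic}.
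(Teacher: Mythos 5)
Your proposal follows the paper's route in its essentials: reduce to connectedness of $g(S)$ via $V\in\langle\mathcal{M}\rangle$, identify the bulk $\bigcup_i C_i^{\sigma(i)}\cap V''$, use the $\kappa$-threshold together with Lemma~\ref{noshortpath1}, Property~\eqref{noshortpath}, and Property~\eqref{k23} to deduce that any piece of the bulk with no edges to the rest has induced minimum degree exceeding $\log\log n^2/2$, then invoke Properties~\eqref{mindeglarge} and~\eqref{twolargesets} to get a contradiction, and finally attach the exceptional vertices via their designated pairs. Two points differ, and one is worth flagging as a misunderstanding of the construction rather than of the argument.

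First, the separate binomial-concentration step to establish $|B|,|V''\setminus B|\geq t$ is unnecessary: the paper's contradiction argument already forces $|A|\geq t$ and $|X\setminus A|\geq t$ whenever $X$ fails to be connected, so no a~priori size bound on the bulk is needed (nor any second application of randomness beyond the sign assignment already used to define $D_1$).

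Second, and more substantively, the ``main obstacle'' you anticipate --- a $D_0$ vertex whose two designated neighbours lie in different colour classes and can be evicted from $g(S)$ simultaneously --- does not arise in the paper's construction, and no local recolouring is needed. The paper removes $N_0$ from the colour classes entirely (the $C_i$ partition $V\setminus(D_0\cup N_0)$, see the construction in Section~\ref{sec:construction}) and instead equips each $N_0$-pair $\{v_+,v_-\}$ with a dedicated pair move $M_{v_+}=M_{v_-}=\{v_+,v_-\}$. Since $v_+,v_-$ are touched by no other move, they flip in lockstep and exactly one of them lies in $g(S)$ for every $g$, so every $v\in D_0$ always has an $N_0$-neighbour in $g(S)$. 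For $D_1$ the mechanism is different from what you describe but has the same effect: $N_1$-pairs carry no pair move at all; they are simply placed in a common colour class $C_j$ and inherit $M_{v_\pm}=C_j$, so again they flip together. Your recolouring-based fix would work, but it is extra machinery the construction is already designed to avoid, and your case analysis should be adjusted to reflect which moves actually touch $N_0$ versus $N_1$.
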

\begin{proof}
As in the proof of Theorem ~\ref{big-p}, $V \in \langle \mathcal{M} \rangle$, thus to prove the claim it is enough to prove that $g(S)$ is connected for every $g \in \langle \mathcal{M} \rangle$.
Observe that, by construction, for any $g\in \mathcal{M}$ and any vertex $v\in D_0\cup D_1$, out of the two vertices $v_+, v_- \in N_0\cup N_1$ exactly one is in $g(S)$.
Therefore, for every $g \in \langle\mathcal{M}\rangle$, no vertex from $D_0\cup D_1$ is isolated in $g(S)$.
Notice
\[g(S)\cap V''=\bigcup_{i=1}^{m}C_i^{\sigma(i)}\cap V''\]
for some $\sigma\in \{+,-\}^m$. 

Suppose for the sake of contradiction that there exists such a vector $\sigma\in \{+,-\}^m$ for which the set $X=\bigcup_{i=1}^{m}C_i^{\sigma(i)}\cap V''$ is not connected. Then there must exist a subset $A\subset X$ such that $e(A,X\setminus A)=0$. 

Consider an arbitrary vertex $v\in A$. 
Since $v\in V''$, we have
\begin{equation*}
|N(v)\cap (g(S)\cup N_0\cup N_1)|\geq \log\log n^2.
\end{equation*}
Furthermore, by Property~\eqref{noshortpath}, $\lt|N(v)\cap \lt(D_0\cup N_0\rt)\rt|\leq 1$, and by Lemma~\ref{noshortpath1}, 
\newline $\lt|N(v)\cap \lt(D_1\cup N_1\rt)\rt|\leq 4000$.
Therefore, $|N(v)\cap A|>\log \log n^2 /2$, or in other words \newline $\delta(G[A])>\log \log n^2 /2$.
By Property~\eqref{mindeglarge} this implies that $|A|\geq t$. 
Analogously, $|X\setminus A|\geq t$, and Property~\eqref{twolargesets} guarantees the existence of an edge between $A$ and $X\setminus A$, a contradiction.
\end{proof}

\subsection{Putting the pieces together}
\label{sec:proof}

Theorems~\ref{big-p} and~\ref{JNW} show that $G\sim \cG(n,p)$ a.a.s. has a legal system for $p\geq 3 \log n/n$. 
Furthermore, for $p\leq \log n/n$, a.a.s. $G$ has a vertex of degree at most $1$, and by Proposition~\ref{degree 1} it does not have a legal system for $n\geq 4$.
Therefore, it suffices to show that in the range 
$\log n/n<p< 3 \log n/n$, the graph $G$ a.a.s. satisfies Properties~\eqref{maxdeg}--\eqref{k23} from Theorem~\ref{pseudorandom}.

Properties~\eqref{maxdeg} and~\eqref{k23} are well-known to hold a.a.s. in this range of $p$. 
Furthermore, Property~\eqref{chromatic} holds  a.a.s. as an immediate consequence of Theorem~\ref{K-P}. Property ~\eqref{noshortpath} also holds a.a.s. Indeed, this is just a special case of Claim~4.4 in~\cite{BSKrSu11} and  Theorem ~4.2.9 in \cite{thesis}.

We show the remaining Properties ~\eqref{mindeglarge}  and ~\eqref{twolargesets} in two separate lemmas.

\begin{lemma}
\label{lem:mindeglarge}
Let $G\sim \cG(n,p)$ with $\frac{\log n}{n}<p< \frac{3\log n}{n}$. 
Then a.a.s. every $A\subseteq V(G)$ satisfying 
$\delta(G[A])>\log \log n^2$ is of size at least $2n\log\log n /\log n$.
\end{lemma}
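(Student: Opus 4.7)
The plan is a standard first-moment argument. Set $d:=(\log\log n)^2$ and $t:=2n\log\log n/\log n$. If $\delta(G[A])>d$ for some $A$ with $|A|=k$, then $|E(G[A])|>kd/2$. Since $G[A]\sim G(k,p)$, its number of edges has mean at most $k^2p/2\leq 3k\log\log n$ (using $k\leq t$ and $p<3\log n/n$), which is smaller than the target $kd/2=k(\log\log n)^2/2$ by a factor of order $\log\log n$. Applying the binomial tail bound $\Pr[\Bin(N,p)\geq s]\leq(eNp/s)^s$ and union-bounding over the at most $(en/k)^k$ subsets of size $k$ yields
\begin{equation*}
\Pr\bigl[\exists A,\ |A|=k,\ \delta(G[A])>d\bigr]\leq e^{h(k)}, \quad h(k):=k\log(en/k)+\tfrac{kd}{2}\log(ekp/d).
\end{equation*}

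It remains to show $\sum_{k=d+1}^{t-1}e^{h(k)}=o(1)$. A direct computation gives $h''(k)=(d-2)/(2k)>0$, so $h$ is convex on $[d+1,t-1]$ (as a function of $k\in\mathbb{R}$); hence $\max_k h(k)=\max(h(d+1),h(t-1))$. At $k=d+1$ one has $ekp/d\sim e\log n/n$, whence $h(d+1)\sim -\tfrac12 d^2\log n=-\tfrac12(\log\log n)^4\log n$; at $k=t$ one has $ekp/d\sim 2e/\log\log n$, whence $h(t)\sim -n(\log\log n)^3\log\log\log n/\log n$. Comparing, $|h(t)|/|h(d+1)|\sim 2n\log\log\log n/(\log^2 n\cdot\log\log n)\to\infty$, so $h(t)\ll h(d+1)$ for large $n$, and thus $\max_k h(k)=h(d+1)$. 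Since $|h(d+1)|=\omega(\log n)$ and there are at most $t\leq n$ summands, we conclude $\sum_{k}e^{h(k)}\leq n\cdot e^{h(d+1)}=n^{-\omega(1)}=o(1)$.

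The main delicate point is the upper endpoint $k=t$: there the mean-to-target ratio $d/(kp)\sim\log\log n/(2c)$ is only moderately large, so the two pieces of $h(t)$ nearly cancel and $h(t)$ is negative only once $\log\log n\cdot\log\log\log n$ exceeds a constant depending on $c$. This threshold is irrelevant for the asymptotic statement, but it shows that one cannot naively estimate $h$ at a single representative scale; the convexity argument is what localises the worst case to the lower endpoint, where the bound is comfortable because an $A$ with $|A|=\Theta((\log\log n)^2)$ and $\delta(G[A])>(\log\log n)^2$ would have to be essentially a clique of that size, which occurs with probability $n^{-\Omega((\log\log n)^4)}$.
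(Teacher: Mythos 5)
Your proof is correct and follows the same overall route as the paper — a first-moment argument combining the Chernoff tail bound for the number of induced edges with a union bound over subsets — but it is substantially more careful, and that extra care is genuinely needed. The paper states the tail estimate as $\exp\bigl[-\Omega\bigl(a(\log\log n)^2\bigr)\bigr]$ uniformly over $a\leq t$; taken at face value this is too weak to survive the union bound at small $a$, since $\binom{n}{a}$ is of order $e^{a\log n}$ while $(\log\log n)^2=o(\log n)$. The correct Chernoff exponent is $a$-dependent: near $a=d+1$ it is of order $a(\log\log n)^2\log n$ (reflecting that a small dense set would have to be nearly a clique), which does comfortably beat $\log\binom{n}{a}$. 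Your convexity computation for $h(k)$ makes exactly this precise and correctly localises the binding case to the lower endpoint $k=d+1$, while verifying that the upper endpoint $k\approx t$ is still fine thanks to the $\log\log\log n$ margin. Two minor nits that do not affect the argument: since $\delta(G[A])>d$ forces $|A|\geq d+2$, the sum could start at $k=d+2$; and in the evaluation at $k=t$ one has $ekp/d\sim 2ec/\log\log n$ for $p=c\log n/n$ with $c\in(1,3)$, so a constant is missing in your ``$\sim 2e/\log\log n$'' — it is absorbed in the asymptotics and changes nothing.
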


\begin{proof}
By Chernoff's inequality, 
the probability that a set $A$ of size $a\leq 2n\log\log n /\log n$ induces more than $a\log\log n^2/5$ edges, 
is $\exp\lt[-\Omega(a\log\log n^2)\rt]$.
Applying the union bound over all such sets provides the statement of the lemma.
\end{proof}

\begin{lemma}
\label{lem:twolargesets}
Let $G \sim \cG(n,p)$ with $\frac{\log n}{n}<p$ and let $t=2n \log \log n / \log n$. Then for any two disjoint sets $A, B \subseteq V(G)$ of sizes at least $t$, there exists an edge in $G$ between $A$ and $B$.
\end{lemma}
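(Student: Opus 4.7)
The plan is a direct union bound. First, observe that it suffices to prove the claim for sets $A,B$ of size \emph{exactly} $t$: any pair of disjoint sets of sizes at least $t$ contains a pair of $t$-subsets, and an edge in the latter is an edge in the former. For a fixed pair of disjoint $t$-sets, the $t^2$ potential edges between $A$ and $B$ are independent in $\cG(n,p)$, so
\[
\prob{e(A,B)=0} = (1-p)^{t^2} \leq e^{-pt^2}.
\]

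The number of ordered pairs of disjoint $t$-subsets of $V(G)$ is at most $\binom{n}{t}^2 \leq (en/t)^{2t}$, so the union bound gives a failure probability of at most
\[
\exp\bigl[\,2t\log(en/t) - pt^2\,\bigr].
\]
I would now plug in $t = 2n\log\log n /\log n$ and $p \geq \log n /n$, writing $L=\log\log n$ and $\ell=\log\log\log n$ for brevity. One computes $pt^2 \geq 4nL^2/\log n$, while $\log(en/t) = L - \ell + O(1)$, so $2t\log(en/t) = 4nL^2/\log n - 4nL\ell/\log n + O(nL/\log n)$.

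The two leading terms of size $4nL^2/\log n$ cancel exactly, leaving exponent $-4nL\ell/\log n + O(nL/\log n)$, which tends to $-\infty$ since $\ell \to \infty$ and $nL/\log n \to \infty$. Hence the failure probability is $o(1)$, which establishes the lemma.

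The only delicate point, and arguably the ``main obstacle,'' is that the union bound and the probability estimate are essentially tight at the threshold $p = \log n /n$: both contributions to the exponent have the same leading order $4n(\log\log n)^2/\log n$ with matching constants, and the argument succeeds only because the second-order correction $\log(en/t) = \log\log n - \log\log\log n + O(1)$ shaves off enough to drive the exponent to $-\infty$. So one must keep the $\log\log\log n$ correction in the binomial estimate rather than absorbing it into a crude $O(\cdot)$, but apart from that bookkeeping the proof is routine.
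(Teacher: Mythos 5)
Your proof is correct and takes essentially the same route as the paper's: reduce to sets of size exactly $t$, bound a single bad pair by $e^{-pt^2}$, bound the number of pairs by $(en/t)^{2t}$, and observe that after the leading $4n(\log\log n)^2/\log n$ terms cancel, the surviving $-\log\log\log n$ correction in $\log(en/t)$ drives the union-bound exponent to $-\infty$. The paper performs the same cancellation, just packaged slightly differently (writing $en/t < 2e^{\log\log n - \log\log\log n}$ before multiplying through).
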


\begin{proof}
Observe that it is enough to prove the theorem for any two sets of size exactly $t$ (assume for simplicity $t$ is an integer). Call a pair of disjoint sets  $A, B \subseteq V$  of size $t$ \textit{bad}, if there is no edge between $A$ and $B$. The probability that such a given pair $A$ and $B$ is bad, is at most

\begin{equation}
\label{eq:badpair}
    (1-p)^{t^2}<e^{-pt^2}<e^{-2t\log \log n }.
\end{equation}.

The number of disjoint pairs of sets of size $t$ $A, B \subseteq V$
is at most

\begin{equation}
\label{eq:numofpairs}
{n \choose t}^2 <(en/t)^{2t}<(2e^{\log \log n -\log \log \log n})^{2t},
\end{equation}

so by ~\eqref{eq:badpair}, ~\eqref{eq:numofpairs} and the union bound, the probability that a bad pair in $G$ exists, is at most 
$e^{-t \log \log \log n}=o(1)$.

\end{proof}

\nocite{*}
\bibliography{legal.bbl}
\bibliographystyle{plain}
\end{document}